\newtheorem{theorem}{Theorem}[section]
\newtheorem{corollary}[theorem]{Corollary}
\newtheorem{open question}[theorem]{Open Question}
\theoremstyle{definition}
\newtheorem{definition}[theorem]{Definition}
\newtheorem{example}[theorem]{Example}
\theoremstyle{remark}
\newtheorem{remark}[theorem]{Remark}
\numberwithin{equation}{section}
\def\DJ{\leavevmode\setbox0=\hbox{D}\kern0pt\rlap
 {\kern.04em\raise.188\ht0\hbox{-}}D}
\begin{document}
\title[On contractive mappings in $b_v(s)$-metric spaces]{On contractive mappings in $b_v(s)$-metric spaces}

\author[P.\ Mondal, H.\ Garai, L.K. \ Dey]
{Pratikshan Mondal$^{1}$, Hiranmoy Garai$^{2}$, Lakshmi Kanta Dey$^{3}$.}

\address{{$^{1}$\,} Department of Mathematics,                                         					\newline \indent Durgapur Government College, 
                    Durgapur, India.}
                    \email{real.analysis77@gmail.com}
\address{{$^{2}$\,} Department of Mathematics,
                    \newline \indent National Institute of Technology
                    Durgapur, India.}
                    \email{hiran.garai24@gmail.com}
\address{{$^{3}$\,} Department of Mathematics,
                   \newline \indent National Institute of Technology
                    Durgapur, India.}
                    \email{lakshmikdey@yahoo.co.in}

\subjclass[2010]{$47$H$10$, $54$H$25$.}
\keywords{$b_v(s)$-metric space; complete space; sequentially compact space; contractive mapping}
\begin{abstract}
%Garai et al. brought into in [The contractive principle for mappings in $b_v(s)$-metric spaces, arXiv:1802.03136], the concepts of sequential, bounded compactness in $b_v(s)$-metric structure and obtained some fixed point results related to Edelstein's contractive mappings in these framework and raised an open problem. 

The major motives of this paper are to study different types of contractive mappings and also to answer an open question of Garai et al. [The contractive principle for mappings in $b_v(s)$-metric spaces, arXiv:1802.03136]. We first set up some fixed point results associated with two types of contractive mappings in $b_v(s)$-metric spaces and then we give an answer, in positive, to the open question. Most importantly, we characterize the completeness of a $b_v(s)$-metric space via fixed point property of a certain type of contractive mappings. Our results extend and generalized several important results in the literature.

%In this paper, we deal with an open question of Garai et al. [Some remarks and fixed  fixed point results with an application in $b_v(s)$-metric spaces, arXiv:1802.03136]. To do so, we first set up some fixed point results associated with two different type of contractive mappings in $b_v(s)$-metric structure. With the help of our established results, we give an affirmative answer to the open question of Garai et al. Finally, we characterize the completeness of a $b_v(s)$-metric space via fixed point property of a certain type of contractive mapping.

\vskip 2mm

\textbf{Keywords:} $b_v(s)$-metric space; sequentially compact space; complete space; contractive mapping.

\end{abstract}

\maketitle

\section{Introduction and preliminaries}\label{sec:1}
Theory of fixed point is an interesting branch in analysis due its simplicity and applications. Many authors contributed to the theory with a numerous number of publications. This theory was originated by Banach \cite{B22} with an interesting and nice result, known as Banach contraction principle. The simplicity and usefulness of Banach contraction principle inspired many researchers to analyse it further. As a result, a number of generalizations and modifications emerge for this principle in different directions. One of these different directions is to change the underlying  metric space to different other abstract spaces. One of such abstract spaces is $b_v(s)$-metric space, which was introduced by Mitrovi\'c and Radenovi\'c \cite{MR} in $2017$. We first recall the definition of a $b_v(s)$-metric space. 
\begin{definition} {\bf(\cite[p. 3089, Definition 1.8]{MR}).}\label{d1}
Let $X$ be a non-empty set,  $v\in \mathbb{N}$ and $s\in [1,\infty)$. A function $\rho:X \times X \to \mathbb{R}$ is said to be a $b_{v}(s)$-metric if for all $x, y\in X$ and for all distinct points $u_1,u_2, \hdots, u_v \in X$, each of them different from $x$ and $y$, the following conditions hold:
\begin{itemize}
\item[{(i)}] $\rho(x,y) \geq 0$ and $\rho(x,y)=0$ $\iff$ $x=y$;
\item[{(ii)}] $\rho(x,y) = \rho(y,x)$;
\item[{(iii)}] $\rho(x,y)\leq s\left[\rho(x,u_1) + \rho(u_1,u_2) + \cdots + \rho(u_v,y)\right].$
\end{itemize} 
\end{definition} 
In this case, $(X,\rho)$ is said to be a $b_v(s)$-metric space. The notions of convergence, Cauchyness of a sequence, continuity of a mapping, completeness etc. can be seen in \cite{MR}. In the succeeding times, many authors contributed to the $b_v(s)$-metric fixed point theory with a number of fixed point results, see \cite{AMR18, DKMR20, MAKR19, AK18}.

Following all these theories, one can observe that these results  are only concerned with different types of contraction conditions. But it is known that standard metric fixed point theory is also enriched by different types of contractive conditions also, see \cite{E1, G, G1, F1, GDC, GDS}. So it is natural to focus on the fixed point results concerning different types of  contractive conditions in $b_v(s)$-metric spaces. Garai et al. \cite {GDM} focused in this direction at first. To do so, they first introduced the concepts of sequential and bounded compactness in the settings of $b_v(s)$-metric spaces, which are as follows.

\begin{definition}
Let $(X,\rho)$ be a $b_v(s)$-metric space. Then $X$ is said to be sequentially compact if for every sequence $\{u_n\}$ in $X$, there is a  subsequence of $\{u_n\}$ that converges to some point of $X$. Again a subset $A$ of $X$ is said to be sequentially compact if every for sequence $\{u_n\}$ in $A$, there is  a subsequence of $\{u_n\}$ that converges to some point of $A$.
\end{definition}

\begin{definition}
Let $(X,\rho)$ be a $b_v(s)$-metric space. Then $X$ is said to be boundedly compact if for every bounded sequence $\{u_n\}$ in $X$, there is  a subsequence of $\{u_n\}$, that converges to some point of $X$. Again a subset $A$ of $X$ is said to be boundedly compact if for every bounded sequence $\{u_n\}$ in $A$, there is a subsequence of $\{u_n\}$ that converges to some point of $A$.
\end{definition}
After this, Garai et al. proved some fixed point results related to contractive  mappings, i.e., a self-map $T$ defined on a $b_v(s)$-metric space $(X,\rho)$ satisfying $\rho(Tx,Ty)<\rho(x,y)$ for all $x,y\in X$ with $x\neq y$. They showed that a mapping satisfying contractive condition on a sequentially compact space acquires a  fixed point but not necessarily acquires a fixed point if the domain of the mapping is not sequentially compact but complete. So we need some additional condition(s) either on the underlying space or on the mapping so as to confirm the existence of fixed point. To find such an additional condition, Garai et al. obtained the following theorem.
\begin{theorem} {\bf(\cite[p. 11, Theorem 3.10]{GDM}).}\label{tt1}
Let $(X,\rho)$ be a complete $b_v(1)$-metric space, and let $T:X\to X$ be a contractive mapping.
%such that $$\rho(Tx,Ty) < \rho(x,y)$$ for all $x,y \in X$ with $x \neq y$. 
Assume that for any $u\in X$ and for any $\varepsilon >0$, there exists $\delta >0$ such that $$\rho(T^nu,T^mu)< \varepsilon + \delta  \Rightarrow \rho(T^{n+1}u,T^{m+1}u)\leq \varepsilon$$ for any $n,m \in \mathbb{N}_0 $. Then $T$ acquires a unique fixed point.
\end{theorem}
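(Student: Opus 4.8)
The plan is to run the classical orbit argument, with Meir--Keeler--type bookkeeping adapted to the $b_v(1)$ setting. Uniqueness is free: if $x,y$ are fixed points with $x\neq y$ then $\rho(x,y)=\rho(Tx,Ty)<\rho(x,y)$, which is impossible. For existence, fix $u\in X$ and set $u_n=T^nu$. If $u_n=u_{n+1}$ for some $n$ we are done, so assume $u_n\neq u_{n+1}$ for all $n$; writing $c_n=\rho(u_n,u_{n+1})$, contractivity gives $c_{n+1}<c_n$, so $c_n$ decreases to some $c\geq 0$. If $c>0$, take $\delta$ from the hypothesis for $\varepsilon=c$, pick $N$ with $c_N<c+\delta$, and apply the hypothesis to the pair $(N,N+1)$ to get $c_{N+1}\leq c$, contradicting $c_{N+1}>c$; hence $c_n\to 0$. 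The same idea, applied chord by chord, gives more: since strict contractivity forces the $u_n$ to be pairwise distinct, for each fixed $p\geq 1$ the sequence $n\mapsto\rho(u_n,u_{n+p})$ is strictly decreasing, and feeding its limit into the hypothesis as $\varepsilon$ shows $\lim_{n\to\infty}\rho(u_n,u_{n+p})=0$.

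The crux is to promote this to Cauchyness of $\{u_n\}$, i.e.\ to make the convergence uniform in $p$. The device I would use is the forward tail-diameter $a_n=\sup_{p\geq 0}\rho(u_n,u_{n+p})$; contractivity gives $\rho(u_{n+1},u_{n+1+p})<\rho(u_n,u_{n+p})$ and hence $a_{n+1}\leq a_n$, so once the orbit is shown to be bounded, $a_n$ decreases to some $a\geq 0$ and it remains to see that $a=0$. Supposing $a>0$ and choosing $\delta$ for $\varepsilon=a$, eventually $a_n<a+\delta$, hence $\rho(u_n,u_{n+p})<a+\delta$ for every $p$, and the hypothesis yields $\rho(u_{n+1},u_{n+1+p})\leq a$ for every $p$; the delicate task is to turn this into a genuine strict decrease of $a_{n+1}$ below $a$, which one does by playing the inequality in Definition~\ref{d1}(iii) against the strict contraction, using the pairwise distinctness of the $u_n$ to supply, for any two indices, the $v$ distinct intermediate points needed. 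I expect this --- together with the preliminary lemma that the orbit is bounded --- to be the main obstacle: for $v\geq 2$ one has no ordinary triangle inequality to lean on and must chain through carefully selected orbit points, so that $N$ must be chosen with $c_n<\delta/v$ for $n\geq N$ in order to keep the accumulated error below $\delta$.

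Granting that $\{u_n\}$ is Cauchy, completeness furnishes $x^{*}\in X$ with $u_n\to x^{*}$. Then $\rho(u_{n+1},Tx^{*})=\rho(Tu_n,Tx^{*})<\rho(u_n,x^{*})\to 0$ (the at most one index $n$ with $u_n=x^{*}$ is handled directly), so $u_n\to Tx^{*}$ as well. Because limits need not be unique in a $b_v(1)$-metric space, I would close the argument with the inequality
\[
\rho(x^{*},Tx^{*})\leq \rho(x^{*},u_{k_1})+\rho(u_{k_1},u_{k_2})+\cdots+\rho(u_{k_v},Tx^{*}),
\]
with $k_1<k_2<\cdots<k_v$ chosen so large that the $u_{k_i}$ are distinct and different from both $x^{*}$ and $Tx^{*}$ (possible since the $u_n$ are pairwise distinct): the right-hand side tends to $0$, since $\rho(x^{*},u_{k_1})\to 0$, $\rho(u_{k_v},Tx^{*})\to 0$, and $\rho(u_{k_i},u_{k_{i+1}})\to 0$ by Cauchyness. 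Hence $\rho(x^{*},Tx^{*})=0$, so $x^{*}=Tx^{*}$, and by the first paragraph this fixed point is unique.
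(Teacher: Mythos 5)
Your uniqueness argument, the proof that $\rho(u_n,u_{n+1})\to 0$, and the closing step (passing from Cauchyness to a fixed point by chaining through $v$ distinct orbit points, which correctly accounts for the possible non-uniqueness of limits in a $b_v(1)$-metric space) are all sound. The problem is that the one step carrying all the difficulty --- Cauchyness of the orbit --- is not actually proved. Your tail-diameter device stalls exactly where you flag it: from $a_n\downarrow a>0$ and the hypothesis with $\varepsilon=a$ you obtain $\rho(u_{n+1},u_{n+1+p})\le a$ for all $p$, i.e.\ $a_{n+1}\le a$, which is no contradiction since $a_{n+1}\ge a$ holds anyway; to push strictly below $a$ you would need some $\varepsilon<a$ whose associated $\delta$ satisfies $\varepsilon+\delta>a$, and the hypothesis provides no such uniformity. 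Moreover the device presupposes a boundedness lemma ($a_0=\sup_p\rho(u_0,u_p)<\infty$) that you do not prove and that is not obvious here: the $v$-point inequality of Definition~\ref{d1}(iii) only yields $\rho(u_0,u_p)\le\sum_{i=0}^{v-1}\rho(u_i,u_{i+1})+\rho(u_0,u_{p-v})$, which grows linearly in $p$. So the proposal has a genuine gap at its centre.

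The route taken in this paper for the general-$s$ analogues (Theorems~\ref{t3} and \ref{t7}), and by Suzuki in the metric case, avoids both issues: fix $\varepsilon>0$, take the corresponding $\delta\le\varepsilon$, choose $n$ so large that all consecutive distances from index $n-1$ onward are below $\delta/(4(v+1))$, and prove $\rho(u_n,u_{n+k})\le\varepsilon$ by induction on $k$. The inductive step bounds $\rho(u_{n-1},u_{n+m})$ via the polygon inequality through the $v$ points $u_n,\dots,u_{n+v-1}$, uses the contractive condition to replace $\rho(u_{n+v-1},u_{n+m})$ by quantities controlled by the induction hypothesis and the small consecutive distances (with separate treatment of $m>v$, $m=v$, $m<v$), arrives at $\rho(u_{n-1},u_{n+m})<\varepsilon+\delta$, and then invokes the $\varepsilon$--$\delta$ hypothesis to conclude $\rho(u_n,u_{n+m+1})\le\varepsilon$. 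No a priori boundedness is needed, and the strict decrease you were missing is replaced by the bootstrapping built into the induction. To repair your write-up, replace the tail-diameter paragraph with this induction.
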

We recognize that the additional assumption due to Theorem~\ref{tt1} does not deal with arbitrary $b_v(s)$-metric spaces, but deals with $b_v(1)$-metric spaces only. So it still remains interesting that what additional assumption(s) will work for arbitrary $b_v(s)$-metric spaces. Subsequently Garai et al. posed the following open problem.
\begin{open question}{\bf(\cite[\, p. 13, Open Question 3.11]{GDM}).}\label{tt2}
Let $(X,\rho)$ be a complete $b_v(s)$-metric space and let $T$ be a self-map on $X$ such that $$\rho(Tx,Ty)<\rho(x,y)$$ for all $x,y \in X$ with $x\neq y$. If $s>1$, then find out a weaker additional assumption on $T$ which will ensure that $T$ possesses a fixed point.
\end{open question}
In this paper, we deal with this open question. To do this, we first consider two  types of contractive mappings, viz., Reich type and \'Ciri\'c type. Then we establish some  results concerning these two types of contractive mappings in the settings of sequentially compact and complete $b_v(s)$-metric spaces. Utilizing these results, we give a positive answer to the open question \ref{tt2}. We further obtain a result from which we can characterize the completeness of  $b_v(s)$-metric spaces.  We also  provide some examples which support the results established in this paper and show that the conditions considered are not fictitious.

Throughout the paper, $\mathbb{N}_0$ stands for the set $\mathbb{N}\cup\{0\}$ and $\mathbb{R}^+$ stands for the set of all non-negative real numbers.

\section{Main Results}\label{sec2}
In this section, we first prove that a mapping defined on  sequentially compact $b_v(s)$-metric  spaces satisfying the Reich type contractive condition is a Picard operator.
\begin{theorem}\label{t1}
Let $(X, \rho)$ be a sequentially compact $b_v(s)$-metric  space. Let $T$ be a self-map on $X$ such that $T$ is orbitally continuous and
$$\rho(Tx, Ty)<a\rho(x, y)+b\rho(x, Tx)+c\rho(y, Ty)$$
for all $x, y\in X$ with $x\neq y$, where $a, b, c\in \mathbb{R}^+$ with $a+b+c=1$. Then $T$ acquires a unique fixed point $u$ (say), and for any $u \in X$, the Picard’s iterative sequence $\{T^n u\}$ converges to $u$. 
\end{theorem}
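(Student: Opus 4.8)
The plan is to follow the Edelstein--Reich template adapted to $b_v(s)$-metric spaces. I would first dispose of uniqueness: if $u$ and $w$ were distinct fixed points, applying the hypothesis to the pair $(u,w)$ gives $\rho(u,w)=\rho(Tu,Tw)<a\rho(u,w)+b\rho(u,Tu)+c\rho(w,Tw)=a\rho(u,w)\le\rho(u,w)$, which is absurd since $\rho(u,w)>0$. So at most one fixed point can exist, and it suffices to produce one and to verify that every Picard orbit converges to it. Fix $u_0\in X$, put $u_n=T^nu_0$ and $\rho_n=\rho(u_n,u_{n+1})$; if $u_N=u_{N+1}$ for some $N$ then $u_N$ is a fixed point and the convergence claim follows from the last step, so assume $u_n\ne u_{n+1}$ for all $n$.

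Next I would show $\{\rho_n\}$ is decreasing: applying the Reich inequality to $(u_{n-1},u_n)$ gives $\rho_n<(a+b)\rho_{n-1}+c\rho_n$, and since $a+b=1-c$ this forces $\rho_n<\rho_{n-1}$; hence $\rho_n\downarrow r$ for some $r\ge 0$, and the whole problem reduces to showing $r=0$ together with the existence of a fixed point. By sequential compactness some subsequence $u_{n_k}\to u^{*}$. Using orbital continuity of $T$ along the orbit of $u_0$, once for each $j$, gives $u_{n_k+j}\to T^ju^{*}$ for every fixed $j\ge 0$; moreover $\rho_{n_k+j}\to r$ for every $j$, since $\{\rho_n\}$ converges.

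The crux is to prove $Tu^{*}=u^{*}$, equivalently $r=0$. Suppose $r>0$. Then, handling separately the easy case in which two consecutive iterates of $u^{*}$ coincide (which already exhibits a fixed point), we may assume $T^ju^{*}\ne T^{j+1}u^{*}$ for every $j$, so that the Reich inequality shows $\{\rho(T^ju^{*},T^{j+1}u^{*})\}_j$ is strictly decreasing. I would then compare each $\rho(T^ju^{*},T^{j+1}u^{*})$ with $r$ by inserting, as the intermediate points required in condition (iii), consecutive orbit terms $u_{n_k+j},u_{n_k+j+1},\dots$ which converge to $T^ju^{*},T^{j+1}u^{*},\dots$ and whose successive $\rho$-distances tend to $r$: routing a chain from $T^ju^{*}$ to $T^{j+1}u^{*}$ one way and through $\rho_{n_k+j}$ the other way, and passing to the limit, should yield constants $\lambda,\mu>0$ depending only on $s$ and $v$ with $r/\mu\le\rho(T^ju^{*},T^{j+1}u^{*})\le\lambda r$ for all $j$. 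Combining these two-sided bounds with the strict monotonicity of $\{\rho(T^ju^{*},T^{j+1}u^{*})\}_j$ and a second application of sequential compactness to the orbit $\{T^ju^{*}\}$ (again propagated by orbital continuity, and, if needed, using auxiliary properties of $b_v(s)$-metric spaces from the cited works) is then meant to produce the desired contradiction, forcing $r=0$ and $Tu^{*}=u^{*}$. This limit-passage in the $b_v(s)$-inequality is the step I expect to be the main obstacle: unlike in a metric space, $\rho$ need not be continuous and limits need not be unique, so one cannot simply let $\rho(u_{n_k+j},u_{n_k+j+1})\to\rho(T^ju^{*},T^{j+1}u^{*})$; the intermediate points in every use of (iii) must be chosen among consecutive orbit terms, the accumulated powers of $s$ must be tracked, and the degenerate situation in which the orbit meets $u^{*}$ or $Tu^{*}$ infinitely often has to be treated by hand. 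The case $v=1$, where (iii) is essentially a $b$-metric-type inequality, is clean, while $v\ge 2$ requires the careful routing just described.

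Finally, with a fixed point $u^{*}$ at hand and $r=0$, I would prove that every Picard orbit converges to $u^{*}$. For arbitrary $w_0$, write $w_n=T^nw_0$ and $d_n=\rho(w_n,u^{*})$; the Reich inequality applied to $(w_n,u^{*})$ gives $d_{n+1}<a\,d_n+b\,\rho(w_n,w_{n+1})+c\,\rho(u^{*},Tu^{*})=a\,d_n+b\,\rho(w_n,w_{n+1})$. Since the argument above applied to the orbit of $w_0$ gives $\rho(w_n,w_{n+1})\to 0$, taking upper limits yields $\limsup_n d_n\le a\limsup_n d_n$, whence $d_n\to 0$ when $a<1$; when $a=1$ (so $b=c=0$) the inequality $d_{n+1}<d_n$ together with $d_{n_k}\to 0$ (since $\{w_n\}$ has, by sequential compactness, a subsequence converging to a fixed point, necessarily $u^{*}$) gives the same conclusion. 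Thus $T^nw_0\to u^{*}$ for every $w_0$, so $T$ is a Picard operator with unique fixed point $u^{*}$, completing the proof.
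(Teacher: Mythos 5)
Your proposal matches the paper's skeleton (monotonicity of $s_n=\rho(u_n,u_{n+1})$, sequential compactness, orbital continuity, then uniqueness and convergence of the Picard orbit), and the uniqueness and final convergence steps are essentially the paper's. But at the crux --- showing that the limit $r$ of $\rho(u_n,u_{n+1})$ is $0$ and that the subsequential limit $u^{*}$ is fixed --- you do not give a proof. You propose to derive two-sided bounds $r/\mu\le\rho(T^ju^{*},T^{j+1}u^{*})\le\lambda r$ with constants $\lambda,\mu$ depending on $s$ and $v$, and to combine them with strict monotonicity and a second application of compactness. Even granting those bounds, this does not yield a contradiction with $r>0$: the sequence $\rho(T^ju^{*},T^{j+1}u^{*})$ would merely be strictly decreasing and bounded below by $r/\mu>0$, which is perfectly consistent; and the ``second application of sequential compactness to the orbit of $u^{*}$'' runs into exactly the same limit-passage difficulty you flagged, so the argument regresses rather than closes. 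You correctly identify the obstacle (non-continuity of $\rho$ in a $b_v(s)$-metric space) but explicitly leave it unresolved (``should yield'', ``is then meant to produce''), so this step is a genuine gap.

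What the paper does instead is sharper: it passes to the limit along the subsequence to obtain the \emph{exact} equalities $\alpha=\lim_k\rho(u_{n_k},u_{n_k+1})=\rho(u,Tu)$ and $\alpha=\lim_k\rho(u_{n_k+1},u_{n_k+2})=\rho(Tu,T^2u)$, and then a single application of the strict Reich inequality to the pair $(u,Tu)$ gives $\alpha=\rho(Tu,T^2u)<(a+b)\alpha+c\alpha=\alpha$, forcing $\alpha=0$ and $Tu=u$. That exact identity (not a two-sided estimate up to constants) is what makes the one-line contradiction work; if you want to justify it rigorously you must argue that $\rho(x_n,y_n)\to\rho(x,y)$ along these particular convergent sequences (or circumvent it), but some version of this identity is indispensable and is precisely what your sketch is missing. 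Two smaller points: you should dispose of the degenerate case $c=1$ (your division by $1-c$ in the monotonicity step fails there; the paper shows the inequality $\rho_n<\rho_n$ forces the orbit to terminate at a fixed point), and the paper handles $a=1$ by reduction to the purely contractive case rather than by your separate limsup argument, though your treatment of that tail is workable.
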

\begin{proof}
Let $u_0\in X$ be arbitrary but fixed. Define a sequence $\{u_n\}$ by $u_n=T^nu_0$ for all $n\in \mathbb{N}$. If $u_n=u_{n+1}$ for some $n\in \mathbb{N}$, then the result is obvious. So we now assume that $u_n\neq u_{n+1}$ for all $n\in \mathbb{N}$. Now note that if $c=1$, then $a=0=b$ and so we have $$\rho(u_{n+1}, u_{n+2})=\rho(Tu_n, Tu_{n+1})< c \rho(u_{n+1}, u_{n+2})=\rho(u_{n+1}, u_{n+2}),$$ which leads to a contradiction. Again if $a=1$, then $b=0=c$ and so the result follows from Theorem $3.1$ of \cite{GDM}. So for the rest of the proof, we  assume that $a,b,c<1$.

We set $s_n=\rho(u_n, u_{n+1})$ for all $n\in \mathbb{N}$. We claim that the sequence $\{s_n\}$ converges to $0$. To prove this, we first show that the sequence $\{s_n\}$ is strictly decreasing. We have
\begin{align*}
s_{n+1}&=\rho(u_{n+1}, u_{n+2})\\
&=\rho(Tu_n, Tu_{n+1})\\
&<a\rho(u_n, u_{n+1})+b\rho(u_n, u_{n+1})+c\rho(u_{n+1}, u_{n+2})\\
&=(a+b)s_n+cs_{n+1}\\
\Rightarrow (1-c)s_{n+1}&<(1-c)s_n\\
\Rightarrow s_{n+1}&<s_n.
\end{align*}

Therefore the sequence $\{s_n\}$ is strictly decreasing. Since $s_n\ge 0$ for all $n$, it follows that $s_n\to \alpha$ for some $\alpha\ge 0$. Now by sequential compactness of $X$, there exists a convergent subsequence, say $\{u_{n_k}\}$ of the sequence $\{u_n\}$. Let $u_{n_k}\to u\in X$ as $k\to \infty$. By the orbital continuity of $T$, we see that the subsequences  $\{u_{n_k+1}\}$ and $\{u_{n_k+2}\}$ converge to $Tu$ and $T^2u$ respectively. Then we have
\begin{align*}
\alpha&=\lim_{n\to \infty} \rho(u_n, u_{n+1})
=\lim_{k\to \infty} \rho(u_{n_k}, u_{n_k+1})
=\rho(u, Tu).
\end{align*}
Again we have
\begin{align*}
\alpha&=\lim_{n\to \infty} \rho(u_n, u_{n+1})
=\lim_{k\to \infty} \rho(u_{n_k+1}, u_{n_k+2})
=\rho(Tu, T^2u).
\end{align*}
We have already noted that $\alpha\ge 0$. If $\alpha>0$, then $u\ne Tu$ and then we have
$$\rho(Tu, T^2u)<a\rho(u, Tu)+b\rho(u, Tu)+c\rho(Tu, T^2u)$$
which implies that  $\rho(Tu, T^2u)<\rho(Tu, T^2u)$, and this leads to a contradiction. Then we have $u=Tu$ and consequently, $\alpha=0$, i.e., the sequence $\{s_n\}$ converges to $0$ and  $u$ is a fixed point of $T$.

We claim that $u$ is the only fixed point of $T$. If not, let $u_1\in X$ be a fixed point of $T$. Then we have
\begin{align*}
\rho(u, u_1)&=\rho(Tu, Tu_1)\\
&<a\rho(u, u_1)+b\rho(u, Tu)+c\rho(u_1, Tu_1)\\
%&=a\rho(u, u_1)+b\rho(u, u)+c\rho(u_1, u_1)\\
&=a\rho(u, u_1)
\end{align*}
which implies that $a>1$, which is not possible here. Hence we must have $\rho(u, u_1)=0$, i.e, $u=u_1$. 

Finally we prove that $u_n=T^nu_0\to u$ as $n \to \infty$. If $u_{n_0}=u$ for some $n_0\in \mathbb{N}$, then
%$$u_{n_0+1}=T^{n_0+1}u_0=T(u_{n_0})=T(z)=z.$$
%Similarly, $u_{n_0+2}=z$ and so on. Hence
$u_n=u$ for all $n\ge n_0$ and so $u_n\to u$ in this case. 
%We now assume that $u_n\ne u$ for all $n\in \mathbb{N}$. Since $\{u_n\}$ contains a subsequence $\{u_{n_k}\}$ such that $u_{n_k}\to u$ as $k\to \infty$, $u$ is a cluster point of $\{u_n\}$. Let $u'$ be another cluster point of $\{u_n\}$. So there exists a subsequence of $\{u_n\}$ which converges to $u'$. Then by a similar argument it follows that $u'$ is fixed point of $T$ which contradicts the uniqueness of $u$. So $u$ is the only cluster point of $\{u_n\}$. 
Let us now define $t_n=\rho(u_n, u)$ for all $n\in \mathbb{N}$. Then,
\begin{align*}
0\le t_{n+1}&=\rho(u_{n+1}, u)\\
&=\rho(Tu_n, Tu)\\
&<a\rho(u_n, u)+b\rho(u_n, Tu_n)+c\rho(u, Tu)\\
&=a\rho(Tu_{n-1}, Tu)+b\rho(u_n, u_{n+1})\\
&<a\{a\rho(u_{n-1}, u)+b\rho(u_{n-1}, u_n)\}+bs_n\\
&=a^2t_{n-1}+abs_{n-1}+bs_n\\
&\cdots\\
&\cdots\\
&<a^{n+1}\rho(u_0,u)+a^nb\rho(u_0, u_1)+a^{n-1}bs_1+\cdots+abs_{n-1}+bs_n\\
&=a^{n+1}\rho(u_0,u)+b\{a^n\rho(u_0, u_1)+a^{n-1}s_1+\cdots+as_{n-1}+s_n\}\\
&\to 0 \mbox{ as }n\to \infty.
\end{align*}

Thus  the proof is done.
\end{proof}
As special cases of the above theorem, we have the following two existing important results:
\begin{corollary} \label{c1}{\bf\cite[\, p. 74, Theorem 1]{E1}.}
Let $(X,\rho)$ be a compact metric space and $T$ be a self-map on $X$ such that 
$$
\rho(Tx,Ty)<\rho(x,y)
$$ 
for all $x,y \in X$ with $x\neq y$. Then $T$ has a unique fixed point.
\end{corollary}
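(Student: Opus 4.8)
The plan is to obtain this corollary as an immediate special case of Theorem~\ref{t1}. First I would note that a metric space is precisely a $b_1(1)$-metric space: setting $v=1$ and $s=1$ in Definition~\ref{d1}, condition (iii) with a single intermediate point $u_1$ reduces to the ordinary triangle inequality $\rho(x,y)\le\rho(x,u_1)+\rho(u_1,y)$, while (i) and (ii) are exactly the usual metric axioms. Moreover, in a metric space compactness and sequential compactness are equivalent, so $(X,\rho)$ is a sequentially compact $b_1(1)$-metric space.

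Next I would check that $T$ satisfies the hypotheses of Theorem~\ref{t1} with the constants $a=1$, $b=c=0$; these lie in $\mathbb{R}^+$ and satisfy $a+b+c=1$. For $x\ne y$ the assumed inequality $\rho(Tx,Ty)<\rho(x,y)$ is literally the Reich-type condition $\rho(Tx,Ty)<a\rho(x,y)+b\rho(x,Tx)+c\rho(y,Ty)$ for this choice. It remains to verify orbital continuity: since $\rho(Tx,Ty)\le\rho(x,y)$ for all $x,y\in X$ (strict when $x\ne y$, trivial when $x=y$), the map $T$ is nonexpansive, hence continuous on $X$, and therefore orbitally continuous. With all hypotheses in hand, Theorem~\ref{t1} delivers a unique fixed point of $T$, which is the assertion.

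There is essentially no obstacle here beyond the bookkeeping just described; the only point meriting a word of care is the dual identification—a metric space viewed as a $b_1(1)$-metric space, and compactness read as sequential compactness—after which the statement is a direct specialization requiring no new estimate. For completeness one could instead recall Edelstein's original argument: the function $x\mapsto\rho(x,Tx)$ is continuous on the compact space $X$ and so attains its infimum at some $x_0$; if $\rho(x_0,Tx_0)>0$ then applying $T$ would strictly decrease this value, contradicting minimality, so $x_0=Tx_0$, and uniqueness is immediate from the strict contractivity $\rho(Tx,Ty)<\rho(x,y)$.
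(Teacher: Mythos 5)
Your derivation is exactly the paper's intended route: Corollary~\ref{c1} is stated there as an immediate special case of Theorem~\ref{t1}, obtained by viewing a compact metric space as a sequentially compact $b_1(1)$-metric space and taking $a=1$, $b=c=0$, with orbital continuity supplied by the fact that a contractive map is nonexpansive, hence continuous. Your write-up just makes these checks explicit (and adds Edelstein's classical argument as a bonus), so it matches the paper's proof in substance.
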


\begin{corollary}\label{c2}{\bf\cite[\, p. 2147, Theorem 2.2]{G1}.}
Let $(X,\rho)$ be a compact metric space and $T$ be a continuous self-map on $X$ such that 
$$
\rho(Tx,Ty)<\frac{1}{2}\Big\{\rho(x,Tx)+\rho(y,Ty)\Big\}
$$
 for all $x,y \in X$ with $x\neq y$. Then $T$ has a unique fixed point.
\end{corollary}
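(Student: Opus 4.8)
The plan is to obtain Corollary \ref{c2} as an immediate specialization of Theorem \ref{t1}. First I would observe that an ordinary metric space $(X,\rho)$ is precisely a $b_1(1)$-metric space: conditions (i) and (ii) of Definition \ref{d1} are the usual non-negativity together with identity of indiscernibles, and symmetry; condition (iii) with $v=1$ and $s=1$ reads $\rho(x,y)\le\rho(x,u_1)+\rho(u_1,y)$, which is just the triangle inequality. (In a metric space this holds for \emph{every} $u_1$, in particular for $u_1\notin\{x,y\}$, so the restricted form demanded in Definition \ref{d1} certainly holds.) Hence the hypotheses of the corollary are meaningful in the $b_v(s)$ framework with $v=1$ and $s=1$.

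Next I would verify the three hypotheses of Theorem \ref{t1}. (a) Sequential compactness: for metric spaces, compactness is equivalent to sequential compactness, so $(X,\rho)$ is a sequentially compact $b_1(1)$-metric space. (b) Orbital continuity: $T$ is assumed continuous on $X$, and continuity trivially implies orbital continuity, since whenever $T^{n_k}x\to z$ one has $T^{n_k+1}x=T(T^{n_k}x)\to Tz$. (c) The Reich-type contractive inequality: take $a=0$ and $b=c=\tfrac12$. Then $a,b,c\in\mathbb{R}^+$ with $a+b+c=1$, and $\rho(Tx,Ty)<a\rho(x,y)+b\rho(x,Tx)+c\rho(y,Ty)$ becomes exactly $\rho(Tx,Ty)<\tfrac12\{\rho(x,Tx)+\rho(y,Ty)\}$ for all $x\ne y$, which is the hypothesis of the corollary.

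With all hypotheses in place, Theorem \ref{t1} gives a unique fixed point $u$ of $T$ (and, as a bonus, convergence of every Picard sequence to $u$), which is the assertion of the corollary. There is essentially no obstacle here: the only points that need care are the two standard facts invoked — compactness $\iff$ sequential compactness for metric spaces, and continuity $\Rightarrow$ orbital continuity — together with the bookkeeping that the choice $a=0,\ b=c=\tfrac12$ is admissible in Theorem \ref{t1}. It is, because the proof of Theorem \ref{t1} only needs to rule out the degenerate cases $a=1$ and $c=1$, neither of which occurs here. Thus the proof reduces to these identifications followed by a one-line appeal to Theorem \ref{t1}.
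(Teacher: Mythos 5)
Your proposal is correct and matches the paper's approach: the paper presents this corollary (without further argument) precisely as the special case of Theorem~\ref{t1} obtained by viewing a compact metric space as a sequentially compact $b_1(1)$-metric space and choosing $a=0$, $b=c=\tfrac12$, which is admissible since $\mathbb{R}^+$ here denotes the non-negative reals. Your added checks (compactness $\iff$ sequential compactness for metric spaces, continuity $\Rightarrow$ orbital continuity) are exactly the routine identifications the paper leaves implicit.
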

Next, we prove an analogous result of Theorem~\ref{t1} in the structure of complete $b_v(s)$-metric spaces. Before proving this result, let us consider the following example:
\begin{example}\label{e2}
Let $X=\left\{\frac{1}{n}:n\ge 2\right\}$. Define a function $\rho:X\times X\to \mathbb{R}$ by
$$ \rho\left(\frac{1}{m}, \frac{1}{n}\right)=\left\{%
\begin{array}{ll}
    |m-n| &\hbox{if $|m-n|\neq 1$}\\
    \frac{1}{2} & \hbox{if $|m-n|=1$}.
\end{array}%
\right.$$ Then $(X, \rho)$ is a complete $b_3(3)$-metric space which is not sequentially compact.

We now define an operator $T:X\to X$ as
$$ T\left(\frac{1}{n}\right)=\left\{%
\begin{array}{ll}
    \frac{1}{2} &\hbox{if $n>2$}\\
    \frac{1}{4} & \hbox{if $n=2$}.
\end{array}%
\right.$$ Note that $T$ is not a contraction, as $\rho\left(\frac{1}{2}, \frac{1}{3}\right)=\frac{1}{2}<\rho\left(T\left(\frac{1}{2}\right), T\left(\frac{1}{3}\right)\right)=2$. It is also easy to verify that $T$ satisfy
$$\rho(Tx, Ty)<\frac{1}{3}\rho(x, y)+\frac{1}{3}\rho(x, Tx)+\frac{1}{3}\rho(y, Ty)$$
for all $x, y\in X$ with $x\neq y$. However $T$ admits no fixed point in $X$. 
\end{example}
\begin{remark}
The above example shows that the condition $\rho(Tx, Ty)<\frac{1}{3}\rho(x, y)+\frac{1}{3}\rho(x, Tx)+\frac{1}{3}\rho(y, Ty)$ is not sufficient to guarantee the existence of fixed point of a mapping in the setting of a complete $b_v(s)$-metric space. Thus we need to consider some additional condition to assure the existence of a fixed point, which is reflected in the following theorem.
\end{remark}
\begin{theorem}\label{t3}
Let $(X, \rho)$ be a complete $b_v(s)$-metric  space. Let $T:X\to X$ be a mapping satisfying the contractive condition of Theorem~ \ref{t1}.  Furthermore, assume that for any $x\in X$ and  for any  $\varepsilon>0$, there exists a $\delta>0$ and an $N\in \mathbb{N}$ such that for $n,m\in \mathbb{N}$ with $n,m \geq N,$
$$\rho(T^nx, T^mx)<s^2\varepsilon+\delta\Longrightarrow \rho(T^{n+1}x, T^{m+1}x)\le \varepsilon.$$
 Then all the conclusions of Theorem \ref{t1} hold good.
\end{theorem}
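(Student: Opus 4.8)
The plan is to run the classical Meir--Keeler argument while keeping careful track of the constant $s$. First I would fix $u_0\in X$, set $u_n=T^nu_0$, and dispose of the trivial case where two consecutive iterates coincide; so assume $u_n\neq u_{n+1}$ for all $n$. The computation in the proof of Theorem~\ref{t1} shows that $s_n:=\rho(u_n,u_{n+1})$ is strictly decreasing, so $s_n\downarrow\alpha$ for some $\alpha\geq 0$. Two observations would then be recorded. (a) The iterates $u_0,u_1,\dots$ are pairwise distinct: if $u_n=u_{n+k}$ for some $k\geq 1$, then $\{s_j\}_{j\geq n}$ is $k$-periodic, contradicting strict monotonicity. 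This is crucial, because it is what lets us feed iterates into condition (iii) of Definition~\ref{d1} as the required distinct intermediate points. (b) $\alpha=0$: if not, apply the additional hypothesis (with $x=u_0$) to $\varepsilon=\alpha$; since $s\geq 1$, we have $s_n\to\alpha<s^2\alpha+\delta$ eventually, so the implication forces $s_{n+1}\leq\alpha$ for large $n$, contradicting $s_{n+1}>\alpha$. Finally I would note that, for each fixed $p\geq 1$, the Reich-type inequality gives $\rho(u_{n+1},u_{n+p+1})<a\,\rho(u_n,u_{n+p})+b\,s_n+c\,s_{n+p}$, whence (since $a<1$, the case $a=1$, i.e. $b=c=0$, being entirely analogous) $\rho(u_n,u_{n+p})\to 0$ as $n\to\infty$ for every fixed $p$.

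The heart of the matter is to show $\{u_n\}$ is Cauchy. Given $\varepsilon>0$, take $\delta>0$ and $N\in\mathbb{N}$ from the hypothesis, put $D:=s^2\varepsilon+\delta$, and (using the previous paragraph) choose $N'\geq N$ so large that $\rho(u_n,u_{n+q})<D$ for all $n\geq N'$ and $1\leq q\leq v+1$, and $s\,(s_n+s_{n+1}+\dots+s_{n+v-1})<\delta$ for all $n\geq N'$. I would then prove, by induction on $m$, the statement $P(m)$: ``$\rho(u_n,u_m)<D$ for every $n$ with $N'\leq n<m$''. Gaps $m+1-n\leq v+1$ are immediate from the choice of $N'$. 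For $m+1-n\geq v+2$, apply condition (iii) with the distinct intermediate points $u_{n+1},\dots,u_{n+v}$ (these differ from $u_n$, and from $u_{m+1}$ since $n+v<m+1$), giving
\[
\rho(u_n,u_{m+1})\leq s\bigl[s_n+s_{n+1}+\dots+s_{n+v-1}+\rho(u_{n+v},u_{m+1})\bigr].
\]
Since $N'\leq n+v-1<m$, the inductive hypothesis $P(m)$ gives $\rho(u_{n+v-1},u_m)<D$, so the additional hypothesis yields $\rho(u_{n+v},u_{m+1})\leq\varepsilon$; with $s\varepsilon\leq s^2\varepsilon$ and the choice of $N'$ this gives $\rho(u_n,u_{m+1})<\delta+s^2\varepsilon=D$, closing the induction. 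As $\varepsilon$ was arbitrary, $\{u_n\}$ is Cauchy.

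By completeness there is $u\in X$ with $u_n\to u$. By orbital continuity of $T$, $u_{n+1}=Tu_n\to Tu$; since $u_{n+1}\to u$ as well, I would insert the consecutive iterates $u_{n_0},u_{n_0+1},\dots,u_{n_0+v-1}$ as the intermediate points of condition (iii) between $u$ and $Tu$ — eventually these differ from both $u$ and $Tu$ (each value is attained by at most one iterate) and their successive distances tend to $0$ — and let $n_0\to\infty$ to conclude $\rho(u,Tu)=0$, i.e. $Tu=u$. Uniqueness is exactly as in Theorem~\ref{t1}: a second fixed point $u'\neq u$ would yield $\rho(u,u')=\rho(Tu,Tu')<a\,\rho(u,u')$, impossible. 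For the Picard conclusion with an arbitrary $x_0\in X$, the same reasoning shows $\{T^nx_0\}$ is Cauchy, hence convergent to a fixed point of $T$, which is $u$ by uniqueness; alternatively, writing $t_n:=\rho(T^nx_0,u)$, the Reich-type inequality gives $t_{n+1}<a\,t_n+b\,\rho(T^nx_0,T^{n+1}x_0)$, and iterating together with $\rho(T^nx_0,T^{n+1}x_0)\to 0$ gives $t_n\to 0$. This yields all the conclusions of Theorem~\ref{t1}.

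The only real obstacle is the Cauchy step. A single use of the $b_v(s)$-triangle inequality multiplies the estimate by $s$, so a naive induction on the gap $m-n$ accumulates powers of $s$ and never closes. The fix is the one above: index the induction by the larger index $m$, and arrange the chain $u_n\to u_{n+1}\to\dots\to u_{n+v}\to u_{m+1}$ so that all but the final hop are between consecutive iterates (individually tiny, summing to less than $\delta/s$), while the final hop $(u_{n+v},u_{m+1})$ is precisely a pair to which the implication of the hypothesis applies, bringing it down to at most $\varepsilon$ — which the budget $D=s^2\varepsilon+\delta$ then accommodates via $s\varepsilon\leq s^2\varepsilon$. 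The short-gap base cases are handled by the decay $\rho(u_n,u_{n+p})\to 0$ for fixed $p$, and the distinctness of all iterates (a by-product of the strict monotonicity of $\{s_n\}$) is what makes the applications of condition (iii) legitimate. Everything else is a routine adaptation of the proof of Theorem~\ref{t1}.
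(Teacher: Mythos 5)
Your proof is correct in substance and shares the paper's overall Meir--Keeler strategy (the strictly decreasing sequence $s_n$, the additional hypothesis forcing $\alpha=0$, an induction establishing Cauchyness, then completeness and uniqueness), but the Cauchy step is organized genuinely differently. The paper fixes $n$ and inducts on the gap $k$, closing the induction by chaining $u_{n-1}\to u_n\to\cdots\to u_{n+v-1}\to u_{n+m}$ and then iterating the Reich contraction $v$ times on the final hop to reduce it to $a^v\rho(u_n,u_{n+m-v})$, with a three-way case split according to $m>v$, $m<v$, $m=v$; the resulting bound $\rho(u_{n-1},u_{n+m})<s^2\varepsilon+\delta$ is then fed into the hypothesis. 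You instead induct on the larger index $m$ (the statement covering all admissible $n$ at once), end the chain at $u_{n+v}$, and control the final hop $\rho(u_{n+v},u_{m+1})$ by applying the $(\varepsilon,\delta)$ hypothesis directly to the pair $(u_{n+v-1},u_m)$ furnished by the inductive hypothesis. This buys a cleaner induction with no case analysis and no iteration of the contraction; in particular it sidesteps the delicate point in the paper's Case II, where the estimate of Case I is invoked for the pair $(u_{n-1},u_{n+m+v})$, whose gap $m+v+1$ lies outside the range of the stated inductive hypothesis. The price is that your base cases require $\rho(u_n,u_{n+q})\to 0$ for each fixed $q\le v+1$, a lemma the paper does not need (its base case is only $q=1$). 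Your explicit verification that the iterates are pairwise distinct, which legitimizes every use of condition (iii) of Definition~\ref{d1}, is a welcome addition that the paper leaves implicit.

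Two small points should be patched. First, the claim that the fixed-gap decay in the case $a=1$ (so $b=c=0$) is ``entirely analogous'' is not quite right as written: the recursion then only shows that $\rho(u_n,u_{n+p})$ is strictly decreasing, hence convergent to some $L_p\ge 0$, not that $L_p=0$. But $L_p=0$ does follow by the same appeal to the additional hypothesis that gave $\alpha=0$ (apply it with $\varepsilon=L_p/s^2$ and derive a contradiction with strict decrease). Second, the concluding sentence ``as $\varepsilon$ was arbitrary, $\{u_n\}$ is Cauchy'' is slightly glossed, since your final bound is $s^2\varepsilon+\delta$ with $\delta$ not under your control; either normalize $\delta\le\varepsilon$ at the outset (as the paper does) or apply the hypothesis once more to shrink the bound to $\varepsilon$. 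Both repairs are immediate.
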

\begin{proof}
For arbitrary $u_0\in X$, consider the sequence $\{u_n\}$ defined by $u_n=T^n{u_0}$ for each $n\in \mathbb{N}$. In case $u_n=u_{n+1}$ for some $n\in \mathbb{N}$, then $u_n$ is the unique fixed point of $T$ and we are done.

Let us suppose that $u_n\ne u_{n+1}$ for all $n\in \mathbb{N}$. Set $s_n=\rho(u_n, u_{n+1})$ for all $n\in \mathbb{N}$. Then proceeding similarly, as in Theorem~\ref{t1}, we find that the sequence $\{s_n\}$ is strictly decreasing. 

Since $s_n\ge 0$ for all $n$, it follows that $s_n\to \alpha$ for some $\alpha\ge 0$. If $\alpha>0$, then by given condition there exists a $\delta'>0$  and an $N_1\in \mathbb{N}$ such that 
$$\rho(u_n, u_{n+1})<s^2\alpha+\delta'\Longrightarrow \rho(u_{n+1}, u_{n+2})\le \alpha$$
for all $n\geq N_1$. 
By definition of $\alpha$, for this $\delta'>0$, there exists a sufficiently large $n\in \mathbb{N}$ such that
$$\rho(u_n, u_{n+1})<\alpha+\delta'\leq s^2\alpha+\delta'.$$
Therefore, $\rho(u_{n+1}, u_{n+2})\le \alpha$ and this leads to a contradiction. Hence we must have $\alpha=0$ i.e., $\displaystyle\mathop{\lim_{n\to \infty} \rho(u_n, u_{n+1})=0}$.

Next, we show that $\{u_n\}$ is a Cauchy sequence. Let $\varepsilon>0$ be arbitrary. Then we get a $\delta>0$ and an $N_2\in \mathbb{N}$ such that 
$$\rho(T^ix, T^jx)<s^2\varepsilon+\delta\Longrightarrow \rho(T^{i+1}x, T^{j+1}x)\le \varepsilon$$
for all $i,j\geq N_2$.

Without loss of generality, we can assume that $\delta\le \varepsilon$. Since $\displaystyle\mathop{\lim_{n\to \infty} \rho(u_n, u_{n+1})=0}$, there exists an $N_3\in \mathbb{N}$ such that
$$\rho(u_n, u_{n+1})<\frac{\delta}{4(v+1)s^2}$$
for all $n\ge N_3$.

Let $n\in \mathbb{N}$ with $n\ge \max\{N_2,N_3\}+1$ be arbitrary. We now show by method of induction that
$$\rho(u_n, u_{n+k})\le \varepsilon$$
for all $k\in \mathbb{N}$.

Clearly, the result is true for $k=1$. Let the result be true for $k=1,2,\cdots, m$.

Case I: Let us first assume that $m>v$. Then
\begin{align*}
&\rho(u_{n-1}, u_{n+m})\\
&\le s\{\rho(u_{n-1}, u_n)+\rho(u_n, u_{n+1})+\cdots+\rho(u_{n+v-2}, u_{n+v-1})+\rho(u_{n+v-1}, u_{n+m})\}\\
&<s\Big\{\rho(u_{n-1}, u_n)+\rho(u_n, u_{n+1})+\cdots+\rho(u_{n+v-2}, u_{n+v-1})+a\rho(u_{n+v-2}, u_{n+m-1})\\
& +b\rho(u_{n+v-2}, u_{n+v-1})+c\rho(u_{n+m-1}, u_{n+m})\}\Big\}\\
&<s\left\{\frac{\delta}{4(v+1)s^2}+\cdots+\frac{\delta}{4(v+1)s^2}+(b+c)\frac{\delta}{4(v+1)s^2}+a\rho(u_{n+v-2}, u_{n+m-1})\right\}\\
&<\frac{\delta}{4s}+\frac{\delta}{4(v+1)s}+as\Big\{a\rho(u_{n+v-3}, u_{n+m-2})+b\rho(u_{n+v-3}, u_{n+v-2})\\
&+c\rho(u_{n+m-2}, u_{n+m-1})\Big\}\\
&<\frac{\delta}{4s}+2\frac{\delta}{4(v+1)s}+a^2s\rho(u_{n+v-3}, u_{n+m-2})\\
&\cdots\\
&<\frac{\delta}{4s}+v\frac{\delta}{4(v+1)s}+a^vs\rho(u_{n}, u_{n+m-v})\\
&<\frac{\delta}{2s}+s\varepsilon.
\end{align*} 

Case II: We now assume that $m<v$. Then
\begin{align*}
&\rho(u_{n-1}, u_{n+m})\\
&<s\{\rho(u_{n+m}, u_{n+m+1})+\rho(u_{n+m+1}, u_{n+m+2})+\cdots+\rho(u_{n+m+v-1}, u_{n+m+v})\\& +\rho(u_{n+m+v}, u_{n-1})\}.
\end{align*}
By Case I, we can conclude that
$$\rho(u_{n+m+v}, u_{n-1})<s\varepsilon+\frac{\delta}{2s}.$$
Therefore, we get
$$\rho(u_{n-1}, u_{n+m})<s\left\{\frac{\delta}{4(v+1)s^2}+\frac{\delta}{4(v+1)s^2}+\cdots+\frac{\delta}{4(v+1)s^2}+s\varepsilon+\frac{\delta}{2s}\right\}<\delta+s^2\varepsilon.$$

Case III: Let us finally consider $m=v$. In this case
\begin{align*}
&\rho(u_{n-1}, u_{n+m})\\
&\le s\{\rho(u_{n-1}, u_n)+\rho(u_n, u_{n+1})+\cdots+\rho(u_{n+v-2}, u_{n+v-1})+\rho(u_{n+v-1}, u_{n+v})\}\\
&<s\left\{\frac{\delta}{4(v+1)s^2}+\frac{\delta}{4(v+1)s^2}+\cdots+\frac{\delta}{4(v+1)s^2}\right\}\\
&<\frac{\delta}{2s}<s\varepsilon+\frac{\delta}{2s}.
\end{align*}
Thus, by combining all three cases, we find that
$$\rho(u_{n-1}, u_{n+m})<s^2\varepsilon+\delta.$$
Then by hypothesis, we get
$$\rho(u_n, u_{n+m+1})\le \varepsilon$$
which shows that the result is true for $k=m+1$. Therefore, by method of induction, we get
$$\rho(u_n, u_{n+k})\le \varepsilon$$
for all $n\ge \max\{N_2,N_3\}+1$ and for all $k\in \mathbb{N}$. Hence $\{u_n\}$ is a Cauchy sequence in $X$ and by completeness of $X$, we find an element $z\in X$ such that $u_n\to z$ as $n\to \infty$. That $z$ is the unique fixed point of $T$ and the sequence $\{T^nu_0\}$ converges to $z$ follow along the same line of proof of Theorem~\ref{t1}. 
The proof is complete.
\end{proof}
The above theorem extends the following theorem due to Suzuki.
\begin{corollary}\label{c5}{\bf\cite[\, p. 2362, Theorem 5]{S2}.}
Let $(X,\rho)$ be a complete metric space and $T$ be a contractive mapping on $T$. Further, assume that for any $x\in X$ and  for any  $\varepsilon>0$, there exists a $\delta>0$ and an $N\in \mathbb{N}$ such that for $n,m\in \mathbb{N}$ with $n,m \geq N,$
$$\rho(T^nx, T^mx)<\varepsilon+\delta\Longrightarrow \rho(T^{n+1}x, T^{m+1}x)\le \varepsilon.$$
 Then all the conclusions of Theorem \ref{t1} hold good.
\end{corollary}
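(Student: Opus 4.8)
The plan is to derive Corollary~\ref{c5} as the special case $v=1$, $s=1$ of Theorem~\ref{t3}. First I would observe that a standard metric space $(X,\rho)$ is exactly a $b_1(1)$-metric space: conditions (i) and (ii) of Definition~\ref{d1} are the usual ones, and condition (iii) with $v=1$, $s=1$ reads $\rho(x,y)\le \rho(x,u_1)+\rho(u_1,y)$, which is the ordinary triangle inequality (note that in a metric space this also holds trivially when $u_1$ coincides with $x$ or $y$, so the restriction to points distinct from $x,y$ in Definition~\ref{d1} costs nothing). Thus completeness of the metric space is completeness of the corresponding $b_1(1)$-metric space.

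Next I would check that the hypotheses of Corollary~\ref{c5} are precisely the hypotheses of Theorem~\ref{t3} specialized to $v=1$, $s=1$. A contractive mapping $T$ (i.e. $\rho(Tx,Ty)<\rho(x,y)$ for $x\neq y$) satisfies the contractive condition of Theorem~\ref{t1} with $a=1$, $b=c=0$; and in a metric space every self-map is orbitally continuous at a point only if the orbit converges, but more to the point, the contractive condition with $b=c=0$ is what is invoked, and the orbital continuity hypothesis of Theorem~\ref{t1} is used there only to pass limits through $T$ along the Picard orbit — in the present setting one can either remark that a contractive map is continuous, hence orbitally continuous, or simply note that the argument of Theorem~\ref{t3} that produces the Cauchy sequence and then identifies its limit as the unique fixed point uses continuity of $T$, which follows from the contractive inequality. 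Finally, with $s=1$ the implication $\rho(T^nx,T^mx)<s^2\varepsilon+\delta\Rightarrow \rho(T^{n+1}x,T^{m+1}x)\le\varepsilon$ becomes $\rho(T^nx,T^mx)<\varepsilon+\delta\Rightarrow \rho(T^{n+1}x,T^{m+1}x)\le\varepsilon$, which is exactly Suzuki's hypothesis.

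Having matched all hypotheses, I would invoke Theorem~\ref{t3} to conclude that $T$ has a unique fixed point and that every Picard sequence $\{T^nu_0\}$ converges to it, which is the assertion of Corollary~\ref{c5} (the conclusions of Theorem~\ref{t1}, which Theorem~\ref{t3} inherits). I would close by remarking that Theorem~\ref{t3} is a genuine extension: it allows any $v\in\mathbb{N}$ and any $s\ge 1$, and the weakening factor $s^2$ on $\varepsilon$ in the premise is exactly what is needed to absorb the distortion coming from the relaxed polygonal inequality (iii).

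The main obstacle, such as it is, is bookkeeping rather than mathematics: one must be a little careful about the orbital-continuity hypothesis in Theorem~\ref{t1}/Theorem~\ref{t3}, since Suzuki's statement does not mention it. The clean way around this is to note that a contractive self-map of a metric space is (uniformly) continuous, hence a fortiori orbitally continuous, so no extra assumption is hidden in the reduction. Everything else is a direct substitution $v=1$, $s=1$, $a=1$, $b=c=0$ into Theorem~\ref{t3}.
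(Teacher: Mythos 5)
Your proposal is correct and matches the paper's intent exactly: the paper offers no separate proof of Corollary~\ref{c5}, presenting it as the immediate specialization $v=1$, $s=1$ (hence $s^2\varepsilon+\delta=\varepsilon+\delta$) of Theorem~\ref{t3}, with the contractive condition being the case $a=1$, $b=c=0$ of the Reich-type inequality. Your added remark that a contractive map is automatically continuous, so no orbital-continuity hypothesis is being smuggled in, is a sensible piece of bookkeeping that the paper leaves implicit.
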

Let us now consider the following example, which is in support of the above theorem.
\begin{example}\label{e4}
Let $X=[0, \infty)$. Define $\rho:X\times X\to \mathbb{R}$ by
$$ \rho(x, y)=\left\{%
\begin{array}{ll}
    0 &\hbox{if $x=y$}\\
    2x & \hbox{if $x\ne 0, y=0$}\\
    2y &\hbox{if $x=0, y\ne 0$}\\
    4(x+y)+1 & \hbox{if $x\ne 0, y\ne 0$}.
\end{array}%
\right.$$
Then $(X, \rho)$ is a complete $b_2(2)$-metric space which is not sequentially compact.

Let us define $T:X\to X$, as
$$ Tx=\left\{%
\begin{array}{ll}
    0 &\hbox{if $x\in [0, 1)$}\\
    \frac{x+1}{4} & \hbox{if $x\ge 1$}.
\end{array}%
\right.$$
Let $x, y\in X$ with $x\ne y$. We now consider the following three cases:

Case-I: Let $x, y\in [0, 1)$. Then $Tx=0=Ty$. So
$$\rho(Tx, Ty)<\frac{1}{3}\rho(x, y)+\frac{1}{3}\rho(x, Tx)+\frac{1}{3}\rho(y, Ty).$$

Case-II: Let $x, y\ge 1$. Then $Tx=\frac{x+1}{4}, Ty=\frac{y+1}{4}$. 

Now, $\rho(Tx, Ty)=4\left(\frac{x+1}{4}+\frac{y+1}{4}\right)+1=x+y+3$.

Also, $\rho(x, y)=4x+4y+1, \rho(x, Tx)=\rho\left(x, \frac{x+1}{4}\right)=5x+2, \rho(y, Ty)=5y+2$. 

Then,
\begin{align*}
&\rho(Tx, Ty)-\left\{\frac{1}{3}\rho(x, y)+\frac{1}{3}\rho(x, Tx)+\frac{1}{3}\rho(y, Ty)\right\}\\
&=(x+y+3)-\frac{1}{3}\{4x+4y+1+5x+2+5y+2\}\\
&=-2x-2y+\frac{4}{3}.
\end{align*}
Since $x\ge 1, y\ge 1$, we have 
$$\rho(Tx, Ty)-\left\{\frac{1}{3}\rho(x, y)+\frac{1}{3}\rho(x, Tx)+\frac{1}{3}\rho(y, Ty)\right\}\le -2-2+\frac{4}{3}<0$$
which implies that $\rho(Tx, Ty)<\frac{1}{3}\rho(x, y)+\frac{1}{3}\rho(x, Tx)+\frac{1}{3}\rho(y, Ty).$

Case-III: Let $x\in [0, 1)$ and $y\ge 1$. Then $Tx=0$ and $Ty=\frac{y+1}{4}$ and so $\rho(Tx, Ty)=\frac{y+1}{2}$.

Now, $$ \rho(x, Tx)=\left\{%
\begin{array}{ll}
    0 &\hbox{if $x=0$}\\
    2x & \hbox{if $x\ne 0$}
\end{array}%
\right. and \ \ \ \rho(y, Ty)=5y+2.$$
Therefore,
$$\rho(Tx, Ty)-\frac{1}{3} \rho(y, Ty)=\frac{y+1}{2}-\frac{5y+2}{3}=\frac{-7y-1}{6}<0$$
and this implies that
$$\rho(Tx, Ty)<\frac{1}{3}\rho(x, y)+\frac{1}{3}\rho(x, Tx)+\frac{1}{3}\rho(y, Ty).$$
Now, let $x\in X$ and let $\varepsilon>0$ be arbitrary.

Case-I: Let $x\in [0, 1)$. Here we choose $\delta=\varepsilon$ and $N=1$. Then clearly,
$$\rho(T^ix, T^jx)<s^2\varepsilon+\delta\Longrightarrow \rho(T^{i+1}x, d^{j+1}x)=0\le \varepsilon.$$

Case-II: Let $x\ge 1$. Then $T^nx=\frac{x+a_n}{4^n}$ where $a_1=1, a_n=a_{n-1}+4^{n-1}$. Then there exists an $N\in \mathbb{N}$ such that $T^Nx<1$.

Let $\varepsilon>0$ be arbitrary. Choose $\delta=\varepsilon$. Then $T^i(T^Nx)=0$ for all $i\in \mathbb{N}$. Therefore,
$$\rho(T^ix, T^jx)<s^2\varepsilon+\delta\Longrightarrow \rho(T^{i+1}x, T^{j+1}x)=0\le \varepsilon$$ 
for $i, j\ge N+1$.

Thus all the conditions of Theorem~\ref{t3} are satisfied. Note that $0$ is the unique fixed point of $T$.
\end{example}
Next, we prove an analogous version of Theorem~\ref{t1} by changing the contractive condition.
\begin{theorem}\label{t5}
Let $(X, \rho)$ be a sequentially compact $b_v(s)$-metric  space. Let $T$ be a self-map on $X$ such that $T$ is orbitally continuous and
$$\rho(Tx, Ty)<\max\big\{\rho(x, y), \rho(x, Tx), \rho(y, Ty)\big\}$$
for all $x, y\in X$ with $x\neq y$. Then all the conclusions of Theorem \ref{t1} hold.
\end{theorem}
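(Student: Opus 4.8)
The plan is to imitate the proof of Theorem~\ref{t1} step by step, replacing the linear combination $a\rho(x,y)+b\rho(x,Tx)+c\rho(y,Ty)$ by the maximum $\max\{\rho(x,y),\rho(x,Tx),\rho(y,Ty)\}$ throughout, and to recover the convergence of the Picard iterates by a compactness argument rather than the geometric estimate used there (where the constant $a<1$ was available). Fix $u_0\in X$, put $u_n=T^nu_0$ and $s_n=\rho(u_n,u_{n+1})$. If $u_n=u_{n+1}$ for some $n$, then $u_n=Tu_n$; and since any two fixed points $p\ne q$ would satisfy $\rho(p,q)=\rho(Tp,Tq)<\max\{\rho(p,q),0,0\}=\rho(p,q)$, this $u_n$ is the unique fixed point, with $T^mu_0=u_n$ eventually, so we are done. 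Hence assume $u_n\ne u_{n+1}$ for all $n$.

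Applying the contractive condition to the pair $(u_n,u_{n+1})$ gives $s_{n+1}<\max\{s_n,s_n,s_{n+1}\}=\max\{s_n,s_{n+1}\}$, and since $s_{n+1}<s_{n+1}$ is absurd, this forces $s_{n+1}<s_n$; thus $\{s_n\}$ decreases to some $\alpha\ge0$. By sequential compactness a subsequence $u_{n_k}\to u$, and orbital continuity yields $u_{n_k+1}\to Tu$ and $u_{n_k+2}\to T^2u$; passing to limits exactly as in Theorem~\ref{t1} gives $\alpha=\rho(u,Tu)=\rho(Tu,T^2u)$. If $\alpha>0$, then $u\ne Tu$, and the contractive condition applied to $(u,Tu)$ yields $\rho(Tu,T^2u)<\max\{\alpha,\alpha,\alpha\}=\alpha=\rho(Tu,T^2u)$, a contradiction; so $\alpha=0$ and $u=Tu$. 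Uniqueness has already been observed above.

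It remains to prove $u_n\to u$, and this is the step where the $\max$-condition does not reproduce the argument of Theorem~\ref{t1}, so it is the main obstacle. Set $t_n=\rho(u_n,u)$; if $u_n=u$ for some $n$, then $u_m=u$ for all $m\ge n$ and we are done, so assume $u_n\ne u$ for all $n$. The contractive condition applied to $(u_n,u)$, together with $\rho(u,Tu)=0$, gives $t_{n+1}<\max\{t_n,s_n,0\}=\max\{t_n,s_n\}$. Put $M_n=\max\{t_n,s_n\}$; then $t_{n+1}<M_n$ and $s_{n+1}<s_n\le M_n$, hence $M_{n+1}<M_n$, so $\{M_n\}$ decreases to some $\mu\ge0$. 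Suppose $\mu>0$. Since $s_n\to\alpha=0$, eventually $s_n<\mu\le M_n$, which forces $M_n=t_n$ for all large $n$, so $t_n\to\mu$. Choosing a subsequence $u_{n_k}\to w$ by sequential compactness and using orbital continuity $u_{n_k+1}\to Tw$, we obtain $\rho(w,u)=\lim t_{n_k}=\mu$ and $\rho(w,Tw)=\lim s_{n_k}=0$, so $w$ is a fixed point; hence $w=u$ and $\mu=\rho(u,u)=0$, a contradiction. Therefore $\mu=0$, and since $0\le t_n\le M_n\to0$ we conclude $u_n\to u$, which completes the proof. The only non-routine point, exactly as in Theorem~\ref{t1}, is the passage $\rho(u_{n_k},u_{n_k+1})\to\rho(u,Tu)$ and its analogues under orbital continuity.
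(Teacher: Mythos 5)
Your argument is correct and, up to the final convergence step, follows the same route as the paper: the strict decrease of $s_n=\rho(u_n,u_{n+1})$, the identification $\alpha=\rho(u,Tu)=\rho(Tu,T^2u)$ via sequential compactness and orbital continuity, the resulting $\alpha=0$, and the uniqueness argument all match. Where you genuinely differ is in proving $u_n\to u$. The paper bounds $t_{n+1}=\rho(u_{n+1},u)<\max\{t_n,s_n\}$ and then splits into the cases ``$t_{n+1}<t_n$ for all $n$'' and ``$t_{n+1}<s_n$'', which is not an exhaustive dichotomy (the maximum may be attained by $t_n$ at some indices and by $s_n$ at others), and it supplements this with a separate cluster-point argument. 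Your device $M_n=\max\{t_n,s_n\}$ repairs this cleanly: $t_{n+1}<M_n$ together with $s_{n+1}<s_n\le M_n$ gives $M_{n+1}<M_n$, so $M_n$ decreases to some $\mu$, and since $s_n\to 0$ a positive $\mu$ forces $M_n=t_n$ eventually, which you then rule out by compactness; this actually closes a small gap in the paper's case analysis. One caveat: the step $\rho(w,u)=\lim t_{n_k}$ tacitly assumes continuity of $\rho$ in its first variable, which a $b_v(s)$-metric need not have; but you do not need it --- once $\rho(w,Tw)=0$ gives $w=Tw$ and hence $w=u$ by uniqueness, the convergence $u_{n_k}\to u$ yields $t_{n_k}=\rho(u_{n_k},u)\to 0$ by the very definition of convergence, so $\mu=0$. (The remaining limit interchange $\rho(u_{n_k},u_{n_k+1})\to\rho(w,Tw)$ is exactly the one the paper itself uses, as you note, so your proof is at the same level of rigor on that point.)
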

\begin{proof}
Let $u_0\in X$ be arbitrary but fixed. Define a sequence $\{u_n\}$ by $u_n=T^nu_0$ for all $n\in \mathbb{N}$. If $u_n=u_{n+1}$ for some $n\in \mathbb{N}$, then the result is obvious. 

We set $s_n=\rho(u_n, u_{n+1})$ for all $n\in \mathbb{N}$. We claim that the sequence $\{s_n\}$ converges to $0$. To prove this, we first show that the sequence $\{s_n\}$ is strictly decreasing. We have
\begin{align*}
s_{n+1}&=\rho(u_{n+1}, u_{n+2})\\
&=\rho(Tu_n, Tu_{n+1})\\
&<\max\{\rho(u_n, u_{n+1}), \rho(u_n, u_{n+1}), \rho(u_{n+1}, u_{n+2})\}\\
&=\max\big\{\rho(u_n, u_{n+1}), \rho(u_{n+1}, u_{n+2})\big\}\\
&=\rho(u_n, u_{n+1})=s_n\\
\Rightarrow s_{n+1}&<s_n.
\end{align*}

Therefore the sequence $\{s_n\}$ is strictly decreasing. Since $s_n\ge 0$ for all $n$, it follows that $s_n\to \alpha$ for some $\alpha\ge 0$. Now sequential compactness of $X$ yields that there is a convergent subsequence, say $\{s_{n_k}\}$ of the sequence $\{s_n\}$. Let $s_{n_k}\to u\in X$ as $k\to \infty$. By the orbital continuity of $T$, we can show that $\alpha=\rho(u, Tu)=\rho(Tu, T^2u).$
%we see that the subsequences  $\{u_{n_k+1}\}$ and $\{u_{n_k+2}\}$ converge to $Tu$ and $T^2u$ respectively. Then we have
%\begin{align*}
%\alpha&=\lim_{n\to \infty} \rho(u_n, u_{n+1})\\
%&=\lim_{k\to \infty} \rho(u_{n_k}, u_{n_k+1})\\
%&=\rho(u, Tu).
%\end{align*}
%Again we have
%\begin{align*}
%\alpha&=\lim_{n\to \infty} \rho(u_n, u_{n+1})\\
%&=\lim_{k\to \infty} \rho(u_{n_k+1}, u_{n_k+2})\\
%&=\rho(Tu, T^2u).
%\end{align*}
We have already noted that $\alpha\ge 0$. If $\alpha>0$, then $u\ne Tu$ and then we have
$$\rho(Tu, T^2u)<\max\{\rho(u, Tu), \rho(u, Tu), \rho(Tu, T^2u)\}$$
which implies that  $\alpha<\alpha$, which is a contradiction. So we must have $\alpha=0$ and therefore $u=Tu$, i.e., the sequence $\{u_n\}$ converges to $0$ and $u$ is a fixed point of $T$.

For uniqueness of the fixed point, let $u_1\in X$ be another fixed point of $T$. We claim that $\rho(u, u_1)=0$. If not, then
\begin{align*}
\rho(u, u_1)&=\rho(Tu, Tu_1)\\
&<\max\{\rho(u, u_1), \rho(u, Tu), \rho(u_1, Tu_1)\}\\
&=\rho(u, u_1),
\end{align*}
a contradiction. This proves the uniqueness of $u$.

Finally we prove that $u_n=T^nu_0\to u$ as $n \to \infty$. If $u_{n_0}=u$ for some $n_0\in \mathbb{N}$, then
$u_n=u$ for all $n\ge n_0$ and hence $u_n\to u$ in this case. 
 
We now assume that $u_n\ne u$ for all $n\in \mathbb{N}$. Since $\{u_n\}$ contains a subsequence $\{u_{n_k}\}$ such that $u_{n_k}\to u$ as $k\to \infty$, $u$ is a cluster point of $\{u_n\}$. Let $u'$ be another cluster point of $\{u_n\}$. So there exists a subsequence of $\{u_n\}$ which converges to $u'$. Then by a similar argument we can show that $u'$ is fixed point of $T$ which contradicts the uniqueness of $u$. Hence $u$ is the only cluster point of $\{u_n\}$. 

Let us now define $t_n=\rho(u_n, u)$ for all $n\in \mathbb{N}$. Then,
\begin{align*}
0\le t_{n+1}&=\rho(u_{n+1}, u)\\
&=\rho(Tu_n, Tu)\\
&<\max\{\rho(u_n, u), \rho(u_n, Tu_n), \rho(u, Tu)\}\\
&=\max\{\rho(u_{n}, u), \rho(u_n, u_{n+1})\}.
\end{align*}

If $0\le t_{n+1}<\rho(u_n, u)=t_n$ for all $n$, then $t_n\to \beta$ as $n \to \infty$ for some $\beta\geq 0$. Then
\begin{align*}
\beta&=\lim_n \rho(u_{n+1}, u)\\
&=\lim_k \rho(u_{n_k+1}, u)\\
&=\rho(Tu, u)=0
\end{align*}
which shows that $t_n\to 0$ as $n\to \infty$. Therefore, in this case, $u_n\to u$ as $n\to \infty$.

If $0\le t_{n+1}<\rho(u_n, u_{n+1})$, then $\displaystyle\mathop{0\le \lim_{n} t_{n+1}\le \lim_n \rho(u_n, u_{n+1})=0}$. This shows that $u_n\to u$ in this case also. Hence in either case, we see that $u_n\to u$ as $n\to \infty$ and the proof is complete.
\end{proof}
We now consider the following example.
\begin{example}\label{e6}
Let $X=\{2^n,3^n:n\in \mathbb{N}\}$. Let us define $\rho:X\times X\to \mathbb{R}$ by
$$ \rho(x, y)=\left\{%
\begin{array}{ll}
    0 &\hbox{if $x=y$}\\
    \frac{1}{x} & \hbox{if $x\neq 0, y=0$}\\
    \frac{1}{y} & \hbox{if $x=0, y\neq 0$}\\
    \frac{1}{x}+\frac{1}{y} & \hbox{if $x=2^n, y=3^m$ or $x=3^n, y=2^m$}\\
    1 & \hbox{if $x=2^n, y=2^m$ or $x=3^n, y=3^m$.}
\end{array}%
\right.$$

Then $(X, \rho)$ is a $b_4(2)$-metric space. It can also be verified that $(X, \rho)$ is sequentially compact. 

Define $T:X\to X$ by
$$ Tx=\left\{%
\begin{array}{ll}
    0 &\hbox{if $x$ is even}\\
    2 & \hbox{if $x$ is odd.}
\end{array}%
\right.$$
Then $T$ is not contractive since $\rho(T0, T3)=\rho(0, 2)=\frac{1}{2}\nless \frac{1}{3}=\rho(0, 3)$. Also $T$ satisfies 
$$\rho(Tx, Ty)<\max\{\rho(x, y), \rho(x, Tx), \rho(y, Ty)\}$$
for all $x, y\in X$ with $x\neq y$. Note that $0$ is the unique fixed point of $T$.
\end{example}
Next, we have the following theorem in the context of complete $b_v(s)$-metric spaces.
\begin{theorem}\label{t7}
Let $(X, \rho)$ be a complete $b_v(s)$-metric  space. Let $T:X\to X$ be a mapping satisfying the condition of Theorem~\ref{t5}. Furthermore, assume that for any $x\in X$ and  for any  $\varepsilon>0$, there exists a $\delta>0$ and an $N\in \mathbb{N}$ such that for $n,m\in \mathbb{N}$ with $n,m \geq N,$
$$\rho(T^nx, T^mx)<s^2\varepsilon+\delta\Longrightarrow \rho(T^{n+1}x, T^{m+1}x)\le \varepsilon.$$
 Then all the conclusions of Theorem \ref{t1} hold good. 
\end{theorem}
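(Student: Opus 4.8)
The plan is to run the argument of Theorem~\ref{t3} essentially verbatim, using the \'Ciri\'c-type condition of Theorem~\ref{t5} wherever the Reich-type condition of Theorem~\ref{t1} was used; the only genuine change is that Reich sums get replaced by maxima, which streamlines the estimates rather than complicating them. Fix $u_0\in X$, put $u_n=T^nu_0$ and $s_n=\rho(u_n,u_{n+1})$, and dispose of the trivial case $u_n=u_{n+1}$. As in Theorem~\ref{t5}, from $s_{n+1}=\rho(Tu_n,Tu_{n+1})<\max\{s_n,s_n,s_{n+1}\}$ one sees that $\{s_n\}$ is strictly decreasing, so $s_n\downarrow\alpha$ for some $\alpha\ge 0$; and the argument of Theorem~\ref{t3} --- which uses only the monotonicity of $\{s_n\}$ together with the extra hypothesis --- then forces $\alpha=0$. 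I would also note here that the orbit $\{u_n\}$ consists of pairwise distinct points: an equality $u_i=u_j$ with $i<j$ would make it periodic and $\{s_n\}$ eventually constant, contradicting strict monotonicity. This legitimizes the (implicit) applications of the $b_v(s)$-inequality to finite stretches of the orbit in what follows.

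The substantive step is once more the Cauchyness of $\{u_n\}$, proved by the same induction on $k$ split into the three cases $m>v$, $m=v$, $m<v$. Given $\varepsilon>0$, choose $\delta\le\varepsilon$ and $N_2$ from the hypothesis and $N_3$ with $s_n<\frac{\delta}{4(v+1)s^2}$ for $n\ge N_3$, and for $n\ge\max\{N_2,N_3\}+1$ show $\rho(u_n,u_{n+k})\le\varepsilon$ for all $k$ inductively. The case $m=v$ needs only the $b_v(s)$-inequality on a unit-gap chain; the case $m<v$ is routed forward through $u_{n+m},u_{n+m+1},\dots,u_{n+m+v},u_{n-1}$ and reduced to the case $m>v$; and in the case $m>v$, after the split
$$\rho(u_{n-1},u_{n+m})\le s\Big\{\rho(u_{n-1},u_n)+\cdots+\rho(u_{n+v-2},u_{n+v-1})+\rho(u_{n+v-1},u_{n+m})\Big\},$$
the tail term is controlled by iterating the contractive inequality
$$\rho\big(u_{n+v-1-t},u_{n+m-t}\big)=\rho\big(Tu_{n+v-2-t},Tu_{n+m-1-t}\big)<\max\big\{\rho(u_{n+v-2-t},u_{n+m-1-t}),\,s_{n+v-2-t},\,s_{n+m-1-t}\big\}$$
until its first argument is $u_n$. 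Since all the $s$-terms occurring are $<\frac{\delta}{4(v+1)s^2}$, the nested maxima collapse --- no geometric factor survives, in contrast with the Reich case --- and applying the induction hypothesis to the resulting pair leaves the tail term $<\varepsilon$. Feeding this back gives $\rho(u_{n-1},u_{n+m})<s^2\varepsilon+\delta$, whence the extra hypothesis yields $\rho(u_n,u_{n+m+1})\le\varepsilon$, closing the induction.

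Once $\{u_n\}$ is Cauchy, completeness gives $z\in X$ with $u_n\to z$; since the tail $\{u_{n+1}\}$ also converges to $z$ while $Tu_n=u_{n+1}\to Tz$ by orbital continuity, uniqueness of limits forces $Tz=z$, and the uniqueness of the fixed point together with $T^nu_0\to z$ for arbitrary $u_0$ are then carried over along the lines of Theorem~\ref{t5}. I expect the only real obstacle to be the index bookkeeping in the three-case Cauchy induction --- in particular, arranging the forward reduction in the case $m<v$ so that, after the $b_v(s)$-split and the iteration of the contraction, one lands on a pair of orbit points whose index-difference is $\le m$ and the induction hypothesis genuinely applies --- exactly the delicate point already present in the proof of Theorem~\ref{t3}; the maximum form of the contractive condition can only help, since all the accumulated error then sits inside a single maximum of small numbers.
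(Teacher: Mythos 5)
Your proposal follows the paper's own proof of Theorem~\ref{t7} essentially verbatim: strict monotonicity of $s_n$, forcing $\alpha=0$ via the extra hypothesis, the three-case induction ($m>v$, $m=v$, $m<v$) in which the nested maxima collapse onto either a small $s$-term or the pair $\rho(u_n,u_{n+m-v+1})$ covered by the induction hypothesis, and then completeness plus orbital continuity for the fixed point. The one delicate point you flag --- whether the forward reduction in the case $m<v$ really lands on a pair with index-difference $\le m$ --- is treated no more carefully in the paper itself (its Case II invokes Case I for $\rho(u_{n-1},u_{n+m+v})$, whose iteration terminates at $\rho(u_n,u_{n+m+1})$, just outside the stated induction hypothesis), so your account matches the paper's argument, including its rough edges.
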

\begin{proof}
Let $u_0\in X$ be arbitrary but fixed and  consider the sequence $\{u_n\}$ where $u_n=T^nu_0$ for all $n\in \mathbb{N}$. If $u_n=u_{n+1}$ for some $n\in \mathbb{N}$, then the result is obvious. 

We set $s_n=\rho(u_n, u_{n+1})$ for all $n\in \mathbb{N}$. We claim that the sequence $\{s_n\}$ converges to $0$. Exactly in the same way as in Theorem~\ref{t5}, we see that the sequence $\{s_n\}$ is strictly decreasing. 

Since $s_n\ge 0$ for all $n$, it follows that $s_n\to \alpha$ for some $\alpha\ge 0$. If $\alpha>0$, then by given condition there is a $\delta'>0$  and an $N_1\in \mathbb{N}$ such that 
$$\rho(u_n, u_{n+1})<s^2\alpha+\delta'\Longrightarrow \rho(u_{n+1}, u_{n+2})\le \alpha$$
for all $n\geq N_1$.
 
By definition of $\alpha$, for this $\delta'>0$, there exists a sufficiently large $n\in \mathbb{N}$ such that
$$\rho(u_n, u_{n+1})<\alpha+\delta'\leq s^2\alpha+\delta'.$$
Therefore, $\rho(u_{n+1}, u_{n+2})\le \alpha$ and this leads to a contradiction. Hence we must have $\alpha=0$ i.e., $\displaystyle\mathop{\lim_{n\to \infty} \rho(u_n, u_{n+1})=0}$.

Next, we show that $\{u_n\}$ is a Cauchy sequence. Let $\varepsilon>0$ be arbitrary. Then there exists a $\delta>0$ and an $N_2\in \mathbb{N}$ such that 
$$\rho(T^nx, T^mx)<s^2\varepsilon+\delta\Longrightarrow \rho(T^{n+1}x, T^{m+1}x)\le \varepsilon$$
for all $n,m\geq N_2$.

Without loss of generality, we can assume that $\delta\le \varepsilon$. Since $\displaystyle\mathop{\lim_{n\to \infty} \rho(u_n, u_{n+1})=0}$, there exists an $N_3\in \mathbb{N}$ such that
$$\rho(u_n, u_{n+1})<\frac{\delta}{2(v+1)s^2}$$
for all $n\ge N_3$.

Let $n\in \mathbb{N}$ with $n\ge \max\{N_2,N_3\}+1$ be arbitrary. We now show by method of induction that
$$\rho(u_n, u_{n+k})\le \varepsilon$$
for all $k\in \mathbb{N}$.

Clearly, the result is true for $k=1$. Let the result be true for $k=1,2,\cdots, m$.

Case I: Let us first assume that $m>v$. Then
\begin{align*}
&\rho(u_{n-1}, u_{n+m})\\
&\le s\{\rho(u_{n-1}, u_n)+\rho(u_n, u_{n+1})+\cdots+\rho(u_{n+v-2}, u_{n+v-1})+\rho(u_{n+v-1}, u_{n+m})\}\\
&<s\Big\{\rho(u_{n-1}, u_n)+\rho(u_n, u_{n+1})+\cdots+\rho(u_{n+v-2}, u_{n+v-1})+\max\{\rho(u_{n+v-2}, u_{n+m-1}),\\
& \rho(u_{n+v-2}, u_{n+v-1}), \rho(u_{n+m-1}, u_{n+m})\}\Big\}.\hspace{20em} (1)
\end{align*} 
If $\max\{\rho(u_{n+v-2}, u_{n+m-1}), \rho(u_{n+v-2}, u_{n+v-1}), \rho(u_{n+m-1}, u_{n+m})\}=\rho(u_{n+v-2}, u_{n+v-1})$ or $=\rho(u_{n+m-1}, u_{n+m})$, then from $(1)$ we get
\begin{align*}
\rho(u_{n-1}, u_{n+m})&<s\left\{\frac{\delta}{2(v+1)s^2}+\frac{\delta}{2(v+1)s^2}+\cdots+\frac{\delta}{2(v+1)s^2}\right\}\\
&=\frac{\delta}{2s}<s\varepsilon+\frac{\delta}{2s}.
\end{align*}
If $\max\{\rho(u_{n+v-2}, u_{n+m-1}), \rho(u_{n+v-2}, u_{n+v-1}), \rho(u_{n+m-1}, u_{n+m})\}=\rho(u_{n+v-2}, u_{n+m-1})$, then from $(1)$, we get
\begin{align*}
&\rho(u_{n-1}, u_{n+m})\\
&\le s\{\rho(u_{n-1}, u_n)+\rho(u_n, u_{n+1})+\cdots+\rho(u_{n+v-2}, u_{n+v-1})+\rho(u_{n+v-2}, u_{n+m-1})\}\\
&<s\Big\{\rho(u_{n-1}, u_n)+\rho(u_n, u_{n+1})+\cdots+\rho(u_{n+v-2}, u_{n+v-1})+\max\{\rho(u_{n+v-3}, u_{n+m-2}),\\
& \rho(u_{n+v-3}, u_{n+v-2}), \rho(u_{n+m-2}, u_{n+m-2})\}\Big\}.\hspace{18em} (2) 
\end{align*}
If $\max\{\rho(u_{n+v-3}, u_{n+m-2}), \rho(u_{n+v-3}, u_{n+v-2}), \rho(u_{n+m-2}, u_{n+m-1})\}=\rho(u_{n+v-3}, u_{n+v-2})$ or $=\rho(u_{n+m-2}, u_{n+m-1})$, then from $(2)$ we get
\begin{align*}
\rho(u_{n-1}, u_{n+m})&<s\left\{\frac{\delta}{2(v+1)s^2}+\frac{\delta}{2(v+1)s^2}+\cdots+\frac{\delta}{2(v+1)s^2}\right\}\\
&=\frac{\delta}{2s}<s\varepsilon+\frac{\delta}{2s}.
\end{align*}

If $\max\{\rho(u_{n+v-3}, u_{n+m-2}), \rho(u_{n+v-3}, u_{n+v-2}), \rho(u_{n+m-2}, u_{n+m-1})\}=\rho(u_{n+v-3}, u_{n+m-2})$, then from $(2)$, we get
$$\rho(u_{n-1}, u_{n+m})<s\{\rho(u_{n-1}, u_n)+\rho(u_n, u_{n+1})+\cdots+\rho(u_{n+v-2}, u_{n+v-1})+\rho(u_{n+v-3}, u_{n+m-2})\}.$$
Continuing as above, we can either get
$$\rho(u_{n-1}, u_{n+m})<s\varepsilon+\frac{\delta}{2s}$$
or
\begin{align*}
&\rho(u_{n-1}, u_{n+m})\\
&\le s\{\rho(u_{n-1}, u_n)+\rho(u_n, u_{n+1})+\cdots+\rho(u_{n+v-2}, u_{n+v-1})+\rho(u_{n}, u_{n+m-v})\}\\
&<s\left\{\frac{\delta}{2(v+1)s^2}+\frac{\delta}{2(v+1)s^2}+\cdots+\frac{\delta}{2(v+1)s^2}+\varepsilon\right\}\\
&<\frac{\delta}{2s}+s\varepsilon.
\end{align*}

Case II: We now assume that $m<v$. Then
\begin{align*}
&\rho(u_{n-1}, u_{n+m})\\
&<s\{\rho(u_{n+m}, u_{n+m+1})+\rho(u_{n+m+1}, u_{n+m+2})+\cdots+\rho(u_{n+m+v-1}, u_{n+m+v})\\&+\rho(u_{n+m+v}, u_{n-1})\}.
\end{align*}
By Case I, we can conclude that
$$\rho(u_{n+m+v}, u_{n-1})<s\varepsilon+\frac{\delta}{2s}.$$
Therefore, we get
$$\rho(u_{n-1}, u_{n+m})<s\left\{\frac{\delta}{2(v+1)s^2}+\frac{\delta}{2(v+1)s^2}+\cdots+\frac{\delta}{2(v+1)s^2}+s\varepsilon+\frac{\delta}{2s}\right\}<\delta+s^2\varepsilon.$$

Case III: Let us finally consider $m=v$. In this case
\begin{align*}
&\rho(u_{n-1}, u_{n+m})\\
&\le s\{\rho(u_{n-1}, u_n)+\rho(u_n, u_{n+1})+\cdots+\rho(u_{n+v-2}, u_{n+v-1})+\rho(u_{n+v-1}, u_{n+v})\}\\
&<s\left\{\frac{\delta}{2(v+1)s^2}+\frac{\delta}{2(v+1)s^2}+\cdots+\frac{\delta}{2(v+1)s^2}\right\}\\
&=\frac{\delta}{2s}<s\varepsilon+\frac{\delta}{2s}.
\end{align*}
Thus, by combining all three cases, we find that
$$\rho(u_{n-1}, u_{n+m})<s^2\varepsilon+\delta.$$
Then by hypothesis, we get
$$\rho(u_n, u_{n+m+1})\le \varepsilon$$
which shows that the result is true for $k=m+1$. Therefore, by method of induction, we get
$$\rho(u_n, u_{n+k})\le \varepsilon$$
for all $n\ge \max\{N_2,N_3\}+1$ and for all $k\in \mathbb{N}$. Hence $\{u_n\}$ is a Cauchy sequence in $X$ and by completeness of $X$, we find an element $u\in X$ such that $u_n\to u$ as $n\to \infty$. That $u$ is the unique fixed point of $T$ and the sequence $\{T^nu_0\}$ converges to $u$ follows along the same line of proof of Theorem~\ref{t5}. 
\end{proof}
The following example will show that the additional condition assumed to prove the existence of fixed point of a mapping in the setting of a complete $b_v(s)$-metric space cannot be removed:

\begin{example}\label{e8}
Let $X=[0, \infty)$. Define $\rho:X\times X\to \mathbb{R}$ by
$$ \rho(x, y)=\left\{%
\begin{array}{ll}
    0 &\hbox{if $x=y$}\\
    1+2x+2y & \hbox{if $x>0, y>0$}\\
    x &\hbox{if $x\neq 0, y=0$}\\
    y &\hbox{if $x=0, y\neq 0$.}
\end{array}%
\right.$$
\end{example}
Then $(X, \rho)$ is a $b_2(2)$-metric space which is complete but not sequentially compact. 

Define a mapping $T:X\to X$ by
$$ Tx=\left\{%
\begin{array}{ll}
    \frac{1}{2} &\hbox{if $x=0$}\\
    \frac{x}{2} & \hbox{if $x\neq 0$.}
\end{array}%
\right.$$
Then $T$ satisfies
$$\rho(Tx, Ty)<\max\{\rho(x, y), \rho(x, Tx), \rho(y, Ty)\}$$
for all $x, y\in X$ with $x\neq y$. Yet the mapping $T$ does not admit any fixed point in $X$.

Let us consider the following example, which will ratify the above result:
\begin{example}\label{e9}
Let $X=[0, 1]$. Then $(X, \rho)$ is a complete $b_1(1)$-metric space. Define $T:X\to X$ by
$$ Tx=\left\{%
\begin{array}{ll}
    \frac{x}{2} \hbox{ if $x\in [0, 4]$}\\
    -2x+10 & \hbox{if $x\in [4, 5]$.}
\end{array}%
\right.$$
Then $T$ is not contractive since $\rho(T4, T5)=\rho(2, 0)=2\nless 1=\rho(4, 5)$. However it is easy to verify that $T$ satisfies 
$$\rho(Tx, Ty)<\max\{\rho(x, y), \rho(x, Tx), \rho(y, Ty)\}$$
for all $x, y\in X$ with $x\ne y$. Note that $0$ is the unique fixed point of $T$.
\end{example}
Finally, from Theorem~\ref{t3} or Theorem~\ref{t7}, we have the following corollary, and by this corollary, we get the answer of the open problem \eqref{tt2}.
\begin{corollary}\label{c10}
Let $(X, \rho)$ be a complete $b_v(s)$-metric  space. Let $T:X\to X$ be a mapping such that
$$\rho(Tx, Ty)<\rho(x, y)$$
for all $x, y\in X$ with $x\neq y$. Furthermore, assume that for any $x\in X$ and  for any  $\varepsilon>0$, there exists a $\delta>0$ and an $N\in \mathbb{N}$ such that for $n,m\in \mathbb{N}$ with $n,m \geq N,$
$$\rho(T^nx, T^mx)<s^2\varepsilon+\delta\Longrightarrow \rho(T^{n+1}x, T^{m+1}x)\le \varepsilon.$$
 Then $T$ has a unique fixed point.
\end{corollary}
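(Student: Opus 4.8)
The plan is to obtain this as a direct consequence of Theorem~\ref{t3} (equivalently, of Theorem~\ref{t7}). The first step is to observe that the hypothesis $\rho(Tx,Ty)<\rho(x,y)$ for $x\neq y$ is exactly the Reich-type contractive condition of Theorem~\ref{t1} in the degenerate case $a=1$, $b=c=0$; these coefficients satisfy $a,b,c\in\mathbb{R}^{+}$ and $a+b+c=1$, so $T$ meets ``the contractive condition of Theorem~\ref{t1}'' required by Theorem~\ref{t3}. (Alternatively, since $\rho(x,y)\le\max\{\rho(x,y),\rho(x,Tx),\rho(y,Ty)\}$, the same hypothesis immediately yields the \'Ciri\'c-type condition of Theorem~\ref{t5}, so $T$ also satisfies the condition required in Theorem~\ref{t7}; either route works.)

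Next I would note that the extra assumption imposed in the corollary --- for every $x\in X$ and every $\varepsilon>0$ there exist $\delta>0$ and $N\in\mathbb{N}$ with $\rho(T^{n}x,T^{m}x)<s^{2}\varepsilon+\delta\Rightarrow\rho(T^{n+1}x,T^{m+1}x)\le\varepsilon$ for $n,m\ge N$ --- is verbatim the additional hypothesis of Theorem~\ref{t3} (and of Theorem~\ref{t7}), including the constant $s^{2}$. Hence all hypotheses of Theorem~\ref{t3} are in force, and invoking it gives that $T$ possesses a unique fixed point $u$ and that the Picard sequence $\{T^{n}x\}$ converges to $u$ for every $x\in X$; in particular $T$ has a unique fixed point, which is the assertion.

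The one point deserving a word of care is that the proof of Theorem~\ref{t1} disposed of the subcase $a=1$, $b=c=0$ by citing Theorem~$3.1$ of \cite{GDM} rather than by the argument given there, so one should check that this subcase is genuinely covered by Theorem~\ref{t3}; it is, because the proof of Theorem~\ref{t3} treats all admissible triples $(a,b,c)$ uniformly --- the strict decrease of $\{s_{n}\}$ only uses $c<1$ (here $c=0$), and the subsequent steps (forcing $\alpha=0$ through the Suzuki-type implication, the three-case induction establishing that $\{u_{n}\}$ is Cauchy, and the appeal to completeness) never single out the coefficients. Thus there is no gap, and no real obstacle arises. Finally, since the conclusion is reached with $s>1$ permitted and the only supplementary requirement is the displayed $s^{2}$-implication --- a weaker demand than the $b_{v}(1)$-restricted condition of Theorem~\ref{tt1} --- this corollary supplies the desired affirmative answer to Open Question~\ref{tt2}.
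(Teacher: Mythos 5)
Your proposal is correct and matches the paper exactly: the paper derives Corollary~\ref{c10} with no further argument, simply as an immediate consequence of Theorem~\ref{t3} (or Theorem~\ref{t7}), which is precisely your route. Your additional check that the degenerate triple $a=1$, $b=c=0$ is genuinely covered by the proof of Theorem~\ref{t3} (which, unlike the proof of Theorem~\ref{t1}, does not outsource that subcase) is a worthwhile precaution the paper omits, but it does not change the approach.
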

%\begin{remark}
%From the above corollary,  we see that the additional assumption on $T$ considered here ensures that $T$ has a fixed point. So the required additional assumption in the open question \ref{tt2} is the following:\\
%for any $x\in X$ and  for any  $\varepsilon>0$, there exists a $\delta>0$ and an $N\in \mathbb{N}$ such that for $i,j\in \mathbb{N}$ with $i,j \geq N,$
%$$\rho(T^ix, T^jx)<s^2\varepsilon+\delta\Longrightarrow \rho(T^{i+1}x, T^{j+1}x)\le \varepsilon.$$
%\end{remark}
Finally, we have the following theorem concerning the completeness of a $b_v(s)$-metric space via the fixed point property of  certain types of contractive mappings.
\begin{theorem}\label{t11}
Let $(X, \rho)$ be a $b_v(s)$-metric space. Assume that every self mapping $T$ on $(X,\rho)$ satisfying the condition
$$\rho(Tx, Ty)<b\rho(x, Tx)+c\rho(y, Ty)$$
for all $x, y\in X$ with $x\neq y$, where $b, c\in \mathbb{R}^+$ with $b+c=1$, has a unique fixed point. Then $(X, \rho)$ is complete. 
\end{theorem}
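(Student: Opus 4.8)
The plan is to argue the contrapositive: assuming $(X,\rho)$ is not complete, I will manufacture a self-map $T$ on $X$ that satisfies the displayed condition with $b=c=\tfrac12$ yet has no fixed point, contradicting the hypothesis. This is a Subrahmanyam-style converse to a Kannan-type fixed point theorem, and the only feature of the $b_v(s)$ structure that will matter is that the map must be pushed ``far enough into the tail'' of a suitable Cauchy sequence for the required strict inequality to survive the constant $s$.

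First I would fix a Cauchy sequence $\{x_n\}$ in $X$ with no limit in $X$. I would use the (routine) fact that a Cauchy sequence in a $b_v(s)$-metric space admitting a convergent subsequence must itself converge; this is essentially in \cite{MR}, the only delicate point being that in applying condition (iii) one must supply $v$ pairwise-distinct intermediate points, different from the two endpoints, which can be taken among far-out terms of the convergent subsequence. Consequently no subsequence of $\{x_n\}$ converges, so each value occurs only finitely often; passing to a subsequence I may assume the $x_n$ are pairwise distinct. Then for every $x\in X$ the number $\rho_*(x):=\inf\{\rho(x,x_n): x_n\neq x\}$ is strictly positive, for otherwise a subsequence of $\{x_n\}$ would converge to $x$. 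Also, setting $\bar\sigma_n:=\sup\{\rho(x_i,x_j): i,j\ge n\}$, the Cauchy condition gives $\bar\sigma_n\to 0$.

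Next I would build $T$. For each $x\in X$ let $N(x)$ be the least index $n$ with $\bar\sigma_n<\tfrac12\rho_*(x)$ and $x_n\neq x$; such $n$ exists since $\bar\sigma_n\to 0$, $\rho_*(x)>0$, and $x$ coincides with at most one $x_j$. Put $Tx:=x_{N(x)}$. Then $\rho(x,Tx)=\rho(x,x_{N(x)})\ge\rho_*(x)>0$, so $Tx\neq x$ and $T$ has no fixed point. For the contractive estimate, take $x\neq y$ and write $p=N(x)$, $q=N(y)$. If $p=q$ then $Tx=Ty$, so $\rho(Tx,Ty)=0<\tfrac12\rho(x,Tx)+\tfrac12\rho(y,Ty)$. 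If $p<q$ then $\rho(Tx,Ty)=\rho(x_p,x_q)\le\bar\sigma_p<\tfrac12\rho_*(x)\le\tfrac12\rho(x,Tx)\le\tfrac12\rho(x,Tx)+\tfrac12\rho(y,Ty)$, and $p>q$ is symmetric. Hence $T$ satisfies $\rho(Tx,Ty)<\tfrac12\rho(x,Tx)+\tfrac12\rho(y,Ty)$ for all distinct $x,y$, i.e. the hypothesis with $b=c=\tfrac12\in\mathbb{R}^+$, $b+c=1$, while possessing no fixed point — a contradiction. Therefore $(X,\rho)$ is complete.

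I expect the single genuine obstacle to be the lemma quoted above, that a Cauchy sequence with a convergent subsequence converges, since the polygon inequality (iii) is only usable when $v$ pairwise-distinct intermediate points (distinct from the endpoints) can be produced; once this lemma is available, the positivity of $\rho_*(x)$ and all remaining estimates are elementary, and in fact the construction is engineered so that (iii) is never invoked directly in checking the contractive condition — the image points $Tx,Ty$ sit so deep in the tail of $\{x_n\}$ that their mutual distance is dominated by half of $\rho(x,Tx)$ alone.
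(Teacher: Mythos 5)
Your proposal is correct and follows essentially the same route as the paper: both refute incompleteness by mapping each point far enough into the tail of a non-convergent Cauchy sequence that the resulting fixed-point-free map satisfies the Kannan-type condition, then invoking the hypothesis. Your uniform construction via $\rho_*(x)$ and $\bar\sigma_n$ (together with the explicit justification that $\rho_*(x)>0$, resting on the Cauchy-subsequence lemma you flag) is a slightly cleaner packaging of the paper's case split between $x\in A$ and $x\notin A$, but the underlying idea is identical.
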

\begin{proof}
Let on the contrary that $(X, \rho)$ be not complete. So we can find a Cauchy sequence $\{u_n\}$ in $X$ such that for no $x\in X$, $\{u_n\}$ converges to $x$. Without loss of generality, let $u_n\neq u_m$ for all $m, n\in \mathbb{N}$. Let $A$ be the range set of $\{u_n\}$ and for any $x\in X$, consider the set $D(x,A)=\inf\{\rho(x,a):a\in A\}$. Then for any $x\notin A$, we have $D(x, A)>0$.

%Let $x\in X$. Then we have either $x\in A$ or $x\notin A$. 
If $x\in A$, then there exists an $n_0\in \mathbb{N}$ such that $x=u_{n_0}$. We can then find an $n'_0\in \mathbb{N}$ such that 
$$\rho(u_m, u_{n'_0})<b\rho(u_{n_0}, u_{n'_0})\eqno(1)$$
for all $m\ge n'_0>n_0$. 

Again if $x\notin A$, then there exists an $n_x\in \mathbb{N}$ such that
\begin{align*}
\rho(u_m, u_{n_x})&<bD(x, A)\mbox{ for all }m\ge n_x\\
&\le b\rho(x, u_n) \mbox{ for all } n\in \mathbb{N}
\end{align*}
which implies that
$$\rho(u_m, u_{n_x})<b\rho(x, u_n)\mbox{ for all } m\ge n_x\mbox{ and for all }n\in \mathbb{N}.\eqno(2)$$
We now define a map $T:X\to X$ by
$$ Tx=\left\{%
\begin{array}{ll}
    u_{n'_0}, &\hbox{if $x\in A$ and}~x=u_{n_0};\\
    u_{n_x}, & \hbox{if $x\notin A$}.
\end{array}%
\right.$$

We claim that $T$ satisfies the condition
$$\rho(Tx, Ty)<b\rho(x, Tx)+c\rho(y, Ty)$$
for all $x, y\in X$ with $x\neq y$. 

For, let $x, y\in X$ with $x\neq y$. If $x, y\in A$, then there exists $n_1, n_2\in \mathbb{N}$ such that $x=u_{n_1}, y=u_{n_2}$. Therefore, $Tx=u_{n'_1}$, $Ty=u_{n'_2}$. Let us suppose that $n'_2\ge n'_1$. Then from $(1)$, we have
$$\rho(Tx, Ty)=\rho(u_{n'_2}, u_{n'_1})<b\rho(u_{n_1}, u_{n'_1})=b\rho(x, Tx)<b\rho(x, Tx)+c\rho(y, Ty).$$
Also, if $x, y\notin A$, then $Tx=u_{n_x}, Ty=u_{n_y}$ for some $n_x, n_y\in \mathbb{N}$. Take $n_y\ge n_x$. Then
$$\rho(Tx, Ty)=\rho(u_{n_x}, u_{n_y})<b\rho(x, u_{n_x})=b\rho(x, Tx)<b\rho(x, Tx)+c\rho(y, Ty).$$
Finally, if $x\notin A$ and $ y\in A$, then $y=u_{n_0}$ for some $n_0\in \mathbb{N}$. Then $Tx=u_{n_x}, Ty=u_{n'_0}$ for some $n_x\in \mathbb{N}$. If $n'_0\ge n_x$, then from $(2)$, we get
$$\rho(Tx, Ty)=\rho(u_{n'_0}, u_{n_x})<b\rho(x, u_{n_x})=b\rho(x, Tx)<b\rho(x, Tx)+c\rho(y, Ty)$$
and if $n_x\ge n'_0$, then from $(1)$, we get
$$\rho(Tx, Ty)=\rho(u_{n_x}, u_{n'_0})<b\rho(u_{n'_0}, u_{n_0})=b\rho(y, Ty)<b\rho(x, Tx)+c\rho(y, Ty).$$
Combining all the above considerations, we get
$$\rho(Tx, Ty)<b\rho(x, Tx)+c\rho(y, Ty)$$
for all $x, y\in X$ with $x\neq y$. It is important to note that $T$ admits no fixed point in $X$. This contradicts our hypothesis and hence $(X,\rho)$ is complete.
\end{proof}
%\section{Consequences}
%From the fixed point results of Section \ref{sec2}, we have the following corollaries as particular cases.
%
%
%
%\begin{corollary}\label{c3}
%Let $(X,\rho)$ be a compact metric space and $T$ be a continuous self-mapping on $X$ such that 
%$$
%\rho(Tx,Ty)<a\rho(x,y)+b\rho(x,Tx)+c\rho(y,Ty)
% $$
% for all $x,y \in X$ with $x\neq y$, where $a,b,c\in [0,\infty)$ with $a+b+c=1$. Then $T$ has a unique fixed point.
%\end{corollary}
%\begin{corollary}\label{c4}
%Let $(X,\rho)$ be a compact metric space and $T$ be a continuous self-mapping on $X$ such that 
%$$
%\rho(Tx,Ty)<\max\{\rho(x,y),\ \rho(x,Tx),\ \rho(y,Ty)\}
% $$
% for all $x,y \in X$ with $x\neq y$, where $a,b,c\in [0,\infty)$ with $a+b+c=1$. Then $T$ has a unique fixed point.
%\end{corollary}
%\begin{corollary}\label{c5}
%Let $(X,\rho)$ be a complete metric space and $T$ be a self-mapping on $X$ such that 
%$$
%\rho(Tx,Ty)<\rho(x,y)
%$$ 
%for all $x,y \in X$ with $x\neq y$. Further, assume that for any $x\in X$ and  for any  $\varepsilon>0$, there exists a $\delta>0$ and an $N\in \mathbb{N}$ such that for $i,j\in \mathbb{N}$ with $i,j \geq N,$
%$$\rho(T^ix, T^jx)<s^2\varepsilon+\delta\Longrightarrow \rho(T^{i+1}x, T^{j+1}x)\le \varepsilon.$$ Then $T$ has a unique fixed point.
%\end{corollary}

\paragraph{\textbf{Acknowledgement}.}
% ----------------------------------------------------------------
The second named author would like to express his special thanks of gratitude to  CSIR, New Delhi, India, for their financial supports under the CSIR-SRF fellowship scheme (Award Number: $09/973(0018)/2017$-EMR-I). 
%\bibliographystyle{plain}
%\bibliography{bibliography_file}

\end{document}